
\documentclass{gen-j-l}
\usepackage{amssymb}
\usepackage{amsfonts}

\setcounter{MaxMatrixCols}{10}

\newtheorem{theorem}{Theorem}[section]
\newtheorem{lemma}[theorem]{Lemma}
\theoremstyle{definition}
\newtheorem{definition}[theorem]{Definition}
\newtheorem{example}[theorem]{Example}

\theoremstyle{remark}
\newtheorem{remark}[theorem]{Remark}
\numberwithin{equation}{section}
\theoremstyle{plain}
\newtheorem{acknowledgement}{Acknowledgement}

\newtheorem{axiom}{Axiom}

\newtheorem{conjecture}{Conjecture}
\newtheorem{corollary}{Corollary}

\newtheorem{exercise}{Exercise}

\newtheorem{proposition}{Proposition}

\copyrightinfo{2001}{enter name of copyright holder}

\typeout{TCILATEX Macros for Scientific Word 4.0 <03 Jan 2002>.}
\typeout{NOTICE:  This macro file is NOT proprietary and may be 
freely copied and distributed.}
\makeatletter

%

%
\newcount\@hour\newcount\@minute\chardef\@x10\chardef\@xv60
\def\tcitime{
\def\@time{%
  \@minute\time\@hour\@minute\divide\@hour\@xv
  \ifnum\@hour<\@x 0\fi\the\@hour:%
  \multiply\@hour\@xv\advance\@minute-\@hour
  \ifnum\@minute<\@x 0\fi\the\@minute
  }}%


\def\x@hyperref#1#2#3{%
   \catcode`\~ = 12
   \catcode`\$ = 12
   \catcode`\_ = 12
   \catcode`\# = 12
   \catcode`\& = 12
   \y@hyperref{#1}{#2}{#3}%
}

\def\y@hyperref#1#2#3#4{%
   #2\ref{#4}#3
   \catcode`\~ = 13
   \catcode`\$ = 3
   \catcode`\_ = 8
   \catcode`\# = 6
   \catcode`\& = 4
}

\@ifundefined{hyperref}{\let\hyperref\x@hyperref}{}
\@ifundefined{msihyperref}{\let\msihyperref\x@hyperref}{}

\@ifundefined{qExtProgCall}{\def\qExtProgCall#1#2#3#4#5#6{\relax}}{}
%
%
%
%
\def\QCTOpt[#1]#2{%
  \def\QCTOptB{#1}
  \def\QCTOptA{#2}
}
\def\QCTNOpt#1{%
  \def\QCTOptA{#1}
  \let\QCTOptB\empty
}
\def\Qct{%
  \@ifnextchar[{%
    \QCTOpt}{\QCTNOpt}
}
\def\QCBOpt[#1]#2{%
  \def\QCBOptB{#1}%
  \def\QCBOptA{#2}%
}
\def\QCBNOpt#1{%
  \def\QCBOptA{#1}%
  \let\QCBOptB\empty
}
\def\Qcb{%
  \@ifnextchar[{%
    \QCBOpt}{\QCBNOpt}%
}
\def\PrepCapArgs{%
  \ifx\QCBOptA\empty
    \ifx\QCTOptA\empty
      {}%
    \else
      \ifx\QCTOptB\empty
        {\QCTOptA}%
      \else
        [\QCTOptB]{\QCTOptA}%
      \fi
    \fi
  \else
    \ifx\QCBOptA\empty
      {}%
    \else
      \ifx\QCBOptB\empty
        {\QCBOptA}%
      \else
        [\QCBOptB]{\QCBOptA}%
      \fi
    \fi
  \fi
}
\newcount\GRAPHICSTYPE
\GRAPHICSTYPE=\z@
\def\GRAPHICSPS#1{%
 \ifcase\GRAPHICSTYPE
   \special{ps: #1}%
 \or
   \special{language "PS", include "#1"}%
 \fi
}%
%
%
%

\def\graffile#1#2#3#4{%
    \bgroup
	   \@inlabelfalse
       \leavevmode
       \@ifundefined{bbl@deactivate}{\def~{\string~}}{\activesoff}%
        \raise -#4 \BOXTHEFRAME{%
           \hbox to #2{\raise #3\hbox to #2{\null #1\hfil}}}%
    \egroup
}%
%
\def\draftbox#1#2#3#4{%
 \leavevmode\raise -#4 \hbox{%
  \frame{\rlap{\protect\tiny #1}\hbox to #2%
   {\vrule height#3 width\z@ depth\z@\hfil}%
  }%
 }%
}%
\newcount\@msidraft
\@msidraft=\z@
\let\nographics=\@msidraft
\newif\ifwasdraft
\wasdraftfalse

\def\GRAPHIC#1#2#3#4#5{%
   \ifnum\@msidraft=\@ne\draftbox{#2}{#3}{#4}{#5}%
   \else\graffile{#1}{#3}{#4}{#5}%
   \fi
}
\def\addtoLaTeXparams#1{%
    \edef\LaTeXparams{\LaTeXparams #1}}%
%

\newif\ifBoxFrame \BoxFramefalse
\newif\ifOverFrame \OverFramefalse
\newif\ifUnderFrame \UnderFramefalse

\def\BOXTHEFRAME#1{%
   \hbox{%
      \ifBoxFrame
         \frame{#1}%
      \else
         {#1}%
      \fi
   }%
}

\def\doFRAMEparams#1{\BoxFramefalse\OverFramefalse\UnderFramefalse\readFRAMEparams#1\end}%
\def\readFRAMEparams#1{%
 \ifx#1\end%
  \let\next=\relax
  \else
  \ifx#1i\dispkind=\z@\fi
  \ifx#1d\dispkind=\@ne\fi
  \ifx#1f\dispkind=\tw@\fi
  \ifx#1t\addtoLaTeXparams{t}\fi
  \ifx#1b\addtoLaTeXparams{b}\fi
  \ifx#1p\addtoLaTeXparams{p}\fi
  \ifx#1h\addtoLaTeXparams{h}\fi
  \ifx#1X\BoxFrametrue\fi
  \ifx#1O\OverFrametrue\fi
  \ifx#1U\UnderFrametrue\fi
  \ifx#1w
    \ifnum\@msidraft=1\wasdrafttrue\else\wasdraftfalse\fi
    \@msidraft=\@ne
  \fi
  \let\next=\readFRAMEparams
  \fi
 \next
 }%
%

\def\IFRAME#1#2#3#4#5#6{%
      \bgroup
      \let\QCTOptA\empty
      \let\QCTOptB\empty
      \let\QCBOptA\empty
      \let\QCBOptB\empty
      #6%
      \parindent=0pt
      \leftskip=0pt
      \rightskip=0pt
      \setbox0=\hbox{\QCBOptA}%
      \@tempdima=#1\relax
      \ifOverFrame
          \typeout{This is not implemented yet}%
          \show\HELP
      \else
         \ifdim\wd0>\@tempdima
            \advance\@tempdima by \@tempdima
            \ifdim\wd0 >\@tempdima
               \setbox1 =\vbox{%
                  \unskip\hbox to \@tempdima{\hfill\GRAPHIC{#5}{#4}{#1}{#2}{#3}\hfill}%
                  \unskip\hbox to \@tempdima{\parbox[b]{\@tempdima}{\QCBOptA}}%
               }%
               \wd1=\@tempdima
            \else
               \textwidth=\wd0
               \setbox1 =\vbox{%
                 \noindent\hbox to \wd0{\hfill\GRAPHIC{#5}{#4}{#1}{#2}{#3}\hfill}\\%
                 \noindent\hbox{\QCBOptA}%
               }%
               \wd1=\wd0
            \fi
         \else
            \ifdim\wd0>0pt
              \hsize=\@tempdima
              \setbox1=\vbox{%
                \unskip\GRAPHIC{#5}{#4}{#1}{#2}{0pt}%
                \break
                \unskip\hbox to \@tempdima{\hfill \QCBOptA\hfill}%
              }%
              \wd1=\@tempdima
           \else
              \hsize=\@tempdima
              \setbox1=\vbox{%
                \unskip\GRAPHIC{#5}{#4}{#1}{#2}{0pt}%
              }%
              \wd1=\@tempdima
           \fi
         \fi
         \@tempdimb=\ht1
         \advance\@tempdimb by -#2
         \advance\@tempdimb by #3
         \leavevmode
         \raise -\@tempdimb \hbox{\box1}%
      \fi
      \egroup%
}%
%
\def\DFRAME#1#2#3#4#5{%
  \hfil\break
  \bgroup
     \leftskip\@flushglue
	 \rightskip\@flushglue
	 \parindent\z@
	 \parfillskip\z@skip
     \let\QCTOptA\empty
     \let\QCTOptB\empty
     \let\QCBOptA\empty
     \let\QCBOptB\empty
	 \vbox\bgroup
        \ifOverFrame 
           #5\QCTOptA\par
        \fi
        \GRAPHIC{#4}{#3}{#1}{#2}{\z@}%
        \ifUnderFrame 
           \break#5\QCBOptA
        \fi
	 \egroup
   \egroup
   \break
}%
%
\def\FFRAME#1#2#3#4#5#6#7{%
  \@ifundefined{floatstyle}
    {
     \begin{figure}[#1]%
    }
    {
	 \ifx#1h
      \begin{figure}[H]%
	 \else
      \begin{figure}[#1]%
	 \fi
	}
  \let\QCTOptA\empty
  \let\QCTOptB\empty
  \let\QCBOptA\empty
  \let\QCBOptB\empty
  \ifOverFrame
    #4
    \ifx\QCTOptA\empty
    \else
      \ifx\QCTOptB\empty
        \caption{\QCTOptA}%
      \else
        \caption[\QCTOptB]{\QCTOptA}%
      \fi
    \fi
    \ifUnderFrame\else
      \label{#5}%
    \fi
  \else
    \UnderFrametrue%
  \fi
  \begin{center}\GRAPHIC{#7}{#6}{#2}{#3}{\z@}\end{center}%
  \ifUnderFrame
    #4
    \ifx\QCBOptA\empty
      \caption{}%
    \else
      \ifx\QCBOptB\empty
        \caption{\QCBOptA}%
      \else
        \caption[\QCBOptB]{\QCBOptA}%
      \fi
    \fi
    \label{#5}%
  \fi
  \end{figure}%
 }%
%
%
%
%
%
\newcount\dispkind%

\def\makeactives{
  \catcode`\"=\active
  \catcode`\;=\active
  \catcode`\:=\active
  \catcode`\'=\active
  \catcode`\~=\active
}
\bgroup
   \makeactives
   \gdef\activesoff{%
      \def"{\string"}%
      \def;{\string;}%
      \def:{\string:}%
      \def'{\string'}%
      \def~{\string~}%
    }
\egroup

\def\FRAME#1#2#3#4#5#6#7#8{%
 \bgroup
 \ifnum\@msidraft=\@ne
   \wasdrafttrue
 \else
   \wasdraftfalse%
 \fi
 \def\LaTeXparams{}%
 \dispkind=\z@
 \def\LaTeXparams{}%
 \doFRAMEparams{#1}%
 \ifnum\dispkind=\z@\IFRAME{#2}{#3}{#4}{#7}{#8}{#5}\else
  \ifnum\dispkind=\@ne\DFRAME{#2}{#3}{#7}{#8}{#5}\else
   \ifnum\dispkind=\tw@
    \edef\@tempa{\noexpand\FFRAME{\LaTeXparams}}%
    \@tempa{#2}{#3}{#5}{#6}{#7}{#8}%
    \fi
   \fi
  \fi
  \ifwasdraft\@msidraft=1\else\@msidraft=0\fi{}%
  \egroup
 }%
%

\def\TEXUX#1{"texux"}

%
%
%
%
%
%
%
%
%
%

%
\long\def\QQQ#1#2{%
     \long\expandafter\def\csname#1\endcsname{#2}}%
\@ifundefined{QTP}{\def\QTP#1{}}{}
\@ifundefined{QEXCLUDE}{\def\QEXCLUDE#1{}}{}
\@ifundefined{Qlb}{}{}
\@ifundefined{Qlt}{}{}
\long\def\QQA#1#2{}%
\def\QTR#1#2{{\csname#1\endcsname {#2}}}%
\def\EXPAND#1[#2]#3{}%
\def\NOEXPAND#1[#2]#3{}%
\def\LaTeXparent#1{}%
\def\ChildStyles#1{}%
\def\ChildDefaults#1{}%
\def\QTagDef#1#2#3{}%

\@ifundefined{correctchoice}{}{}
\@ifundefined{HTML}{\def\HTML#1{\relax}}{}
\@ifundefined{TCIIcon}{\def\TCIIcon#1#2#3#4{\relax}}{}
\if@compatibility
  \typeout{Not defining UNICODE  U or CustomNote commands for LaTeX 2.09.}
\else
  \providecommand{\UNICODE}[2][]{\protect\rule{.1in}{.1in}}
  \providecommand{\U}[1]{\protect\rule{.1in}{.1in}}
  
\fi

\@ifundefined{lambdabar}{
      
   }{}

%
\@ifundefined{StyleEditBeginDoc}{}{}
%
\def\QQfnmark#1{\footnotemark}

%
%
\@ifundefined{TCIMAKEINDEX}{}{\makeindex}%
%
\@ifundefined{abstract}{%
 \def\abstract{%
  \if@twocolumn
   \section*{Abstract (Not appropriate in this style!)}%
   \else \small 
   \begin{center}{\bf Abstract\vspace{-.5em}\vspace{\z@}}\end{center}%
   \quotation 
   \fi
  }%
 }{%
 }%
\@ifundefined{endabstract}{\def\endabstract
  {\if@twocolumn\else\endquotation\fi}}{}%
\@ifundefined{maketitle}{\def\maketitle#1{}}{}%
\@ifundefined{affiliation}{\def\affiliation#1{}}{}%
\@ifundefined{proof}{}{}%
\@ifundefined{endproof}{}{}%
\@ifundefined{newfield}{\def\newfield#1#2{}}{}%
\@ifundefined{chapter}{\def\chapter#1{\par(Chapter head:)#1\par }%
 \newcount\c@chapter}{}%
\@ifundefined{part}{\def\part#1{\par(Part head:)#1\par }}{}%
\@ifundefined{section}{\def\section#1{\par(Section head:)#1\par }}{}%
\@ifundefined{subsection}{\def\subsection#1%
 {\par(Subsection head:)#1\par }}{}%
\@ifundefined{subsubsection}{\def\subsubsection#1%
 {\par(Subsubsection head:)#1\par }}{}%
\@ifundefined{paragraph}{\def\paragraph#1%
 {\par(Subsubsubsection head:)#1\par }}{}%
\@ifundefined{subparagraph}{\def\subparagraph#1%
 {\par(Subsubsubsubsection head:)#1\par }}{}%
\@ifundefined{therefore}{}{}%
\@ifundefined{backepsilon}{}{}%
\@ifundefined{yen}{}{}%
\@ifundefined{registered}{%
   \def\registered{\relax\ifmmode{}\r@gistered
                    \else$\m@th\r@gistered$\fi}%
 \def\r@gistered{^{\ooalign
  {\hfil\raise.07ex\hbox{$\scriptstyle\rm\text{R}$}\hfil\crcr
  \mathhexbox20D}}}}{}%
\@ifundefined{Eth}{}{}%
\@ifundefined{eth}{}{}%
\@ifundefined{Thorn}{}{}%
\@ifundefined{thorn}{}{}%
%
\@ifundefined{degree}{}{}%
%
\newdimen\theight
\@ifundefined{Column}{\def\Column{%
 \vadjust{\setbox\z@=\hbox{\scriptsize\quad\quad tcol}%
  \theight=\ht\z@\advance\theight by \dp\z@\advance\theight by \lineskip
  \kern -\theight \vbox to \theight{%
   \rightline{\rlap{\box\z@}}%
   \vss
   }%
  }%
 }}{}%
\@ifundefined{qed}{\def\qed{%
 \ifhmode\unskip\nobreak\fi\ifmmode\ifinner\else\hskip5\p@\fi\fi
 \hbox{\hskip5\p@\vrule width4\p@ height6\p@ depth1.5\p@\hskip\p@}%
 }}{}%
\@ifundefined{cents}{}{}%
\@ifundefined{tciLaplace}{}{}%
\@ifundefined{tciFourier}{}{}%
\@ifundefined{textcurrency}{}{}%
\@ifundefined{texteuro}{}{}%
\@ifundefined{textfranc}{}{}%
\@ifundefined{textlira}{}{}%
\@ifundefined{textpeseta}{}{}%
\@ifundefined{miss}{\def\miss{\hbox{\vrule height2\p@ width 2\p@ depth\z@}}}{}%
\@ifundefined{vvert}{}{}
\@ifundefined{tcol}{\def\tcol#1{{\baselineskip=6\p@ \vcenter{#1}} \Column}}{}%
\@ifundefined{dB}{}{}
\@ifundefined{mB}{}{}
\@ifundefined{nB}{}{}
\@ifundefined{note}{}{}%
\def\newfmtname{LaTeX2e}
%
\ifx\fmtname\newfmtname
  \DeclareOldFontCommand{\rm}{\normalfont\rmfamily}{\mathrm}
  \DeclareOldFontCommand{\sf}{\normalfont\sffamily}{\mathsf}
  \DeclareOldFontCommand{\tt}{\normalfont\ttfamily}{\mathtt}
  \DeclareOldFontCommand{\bf}{\normalfont\bfseries}{\mathbf}
  \DeclareOldFontCommand{\it}{\normalfont\itshape}{\mathit}
  \DeclareOldFontCommand{\sl}{\normalfont\slshape}{\@nomath\sl}
  \DeclareOldFontCommand{\sc}{\normalfont\scshape}{\@nomath\sc}
\fi

%

\def\alpha{{\Greekmath 010B}}%
\def\beta{{\Greekmath 010C}}%
\def\gamma{{\Greekmath 010D}}%
\def\delta{{\Greekmath 010E}}%
\def\epsilon{{\Greekmath 010F}}%
\def\zeta{{\Greekmath 0110}}%
\def\eta{{\Greekmath 0111}}%
\def\theta{{\Greekmath 0112}}%
\def\iota{{\Greekmath 0113}}%
\def\kappa{{\Greekmath 0114}}%
\def\lambda{{\Greekmath 0115}}%
\def\mu{{\Greekmath 0116}}%
\def\nu{{\Greekmath 0117}}%
\def\xi{{\Greekmath 0118}}%
\def\pi{{\Greekmath 0119}}%
\def\rho{{\Greekmath 011A}}%
\def\sigma{{\Greekmath 011B}}%
\def\tau{{\Greekmath 011C}}%
\def\upsilon{{\Greekmath 011D}}%
\def\phi{{\Greekmath 011E}}%
\def\chi{{\Greekmath 011F}}%
\def\psi{{\Greekmath 0120}}%
\def\omega{{\Greekmath 0121}}%
\def\varepsilon{{\Greekmath 0122}}%
\def\vartheta{{\Greekmath 0123}}%
\def\varpi{{\Greekmath 0124}}%
\def\varrho{{\Greekmath 0125}}%
\def\varsigma{{\Greekmath 0126}}%
\def\varphi{{\Greekmath 0127}}%

\def\nabla{{\Greekmath 0272}}
\def\FindBoldGroup{%
   {\setbox0=\hbox{$\mathbf{x\global\edef\theboldgroup{\the\mathgroup}}$}}%
}

\def\Greekmath#1#2#3#4{%
    \if@compatibility
        \ifnum\mathgroup=\symbold
           \mathchoice{\mbox{\boldmath$\displaystyle\mathchar"#1#2#3#4$}}%
                      {\mbox{\boldmath$\textstyle\mathchar"#1#2#3#4$}}%
                      {\mbox{\boldmath$\scriptstyle\mathchar"#1#2#3#4$}}%
                      {\mbox{\boldmath$\scriptscriptstyle\mathchar"#1#2#3#4$}}%
        \else
           \mathchar"#1#2#3#4%
        \fi 
    \else 
        \FindBoldGroup
        \ifnum\mathgroup=\theboldgroup 
           \mathchoice{\mbox{\boldmath$\displaystyle\mathchar"#1#2#3#4$}}%
                      {\mbox{\boldmath$\textstyle\mathchar"#1#2#3#4$}}%
                      {\mbox{\boldmath$\scriptstyle\mathchar"#1#2#3#4$}}%
                      {\mbox{\boldmath$\scriptscriptstyle\mathchar"#1#2#3#4$}}%
        \else
           \mathchar"#1#2#3#4%
        \fi     	    
	  \fi}

\newif\ifGreekBold  \GreekBoldfalse
\let\SAVEPBF=\pbf
\def\pbf{\GreekBoldtrue\SAVEPBF}%

\@ifundefined{theorem}{\newtheorem{theorem}{Theorem}}{}
\@ifundefined{lemma}{\newtheorem{lemma}[theorem]{Lemma}}{}
\@ifundefined{corollary}{\newtheorem{corollary}[theorem]{Corollary}}{}
\@ifundefined{conjecture}{}{}
\@ifundefined{proposition}{\newtheorem{proposition}[theorem]{Proposition}}{}
\@ifundefined{axiom}{}{}
\@ifundefined{remark}{\newtheorem{remark}{Remark}}{}
\@ifundefined{example}{}{}
\@ifundefined{exercise}{}{}
\@ifundefined{definition}{}{}

\@ifundefined{mathletters}{%
  \newcounter{equationnumber}  
  \def\mathletters{%
     \addtocounter{equation}{1}
     \edef\@currentlabel{\theequation}%
     \setcounter{equationnumber}{\c@equation}
     \setcounter{equation}{0}%
     \edef\theequation{\@currentlabel\noexpand\alph{equation}}%
  }
  
}{}

\@ifundefined{BibTeX}{%
    \def\BibTeX{{\rm B\kern-.05em{\sc i\kern-.025em b}\kern-.08em
                 T\kern-.1667em\lower.7ex\hbox{E}\kern-.125emX}}}{}%
\@ifundefined{AmS}%
    {\def\AmS{{\protect\usefont{OMS}{cmsy}{m}{n}%
                A\kern-.1667em\lower.5ex\hbox{M}\kern-.125emS}}}{}%
\@ifundefined{AmSTeX}{}{}%
%

\def\@@eqncr{\let\@tempa\relax
    \ifcase\@eqcnt \def\@tempa{& & &}\or \def\@tempa{& &}%
      \else \def\@tempa{&}\fi
     \@tempa
     \if@eqnsw
        \iftag@
           \@taggnum
        \else
           \@eqnnum\stepcounter{equation}%
        \fi
     \fi
     \global\tag@false
     \global\@eqnswtrue
     \global\@eqcnt\z@\cr}

\def\TCItag{\@ifnextchar*{\@TCItagstar}{\@TCItag}}
\def\@TCItag#1{%
    \global\tag@true
    \global\def\@taggnum{(#1)}}
\def\@TCItagstar*#1{%
    \global\tag@true
    \global\def\@taggnum{#1}}
%
%
%
%
%
%
%
%
%
%
%
%
%
%
%
%
%
%
%
%
%
%
%
%
%
%
%
%
%
%
%
%
%
%
%
%
%
%
%
%
%
%
%
%
%
%
%
%
%
%
%
%
%
%
%
%
%
%

\RequirePackage{amsmath}
\makeatother

\begin{document}
\title[G-Dedekind domains]{Revisiting G-Dedekind domains}
\author{M. Zafrullah}
\address{Department of Mathematics, Idaho State University, Pocatello, 83209
ID}
\email{mzafrullah@usa.net}
\thanks{I am thankful to my friends who made me rise from the dead.}
\subjclass{13A15 13A05 13F05 13G05}
\keywords{G-Dedekind, pseudo Dedekind, invertible, $\ast $-invertible
Noetherian, Mori}
\dedicatory{Dedicated to fairness}

\begin{abstract}
Let $R$ be an integral domain with $qf(R)=K$ and let $F(R)$ be the set of
nonzero fractional ideals of $R.$ Call $R$ a dually compact domain (DCD) if
for each $I\in F(R)$ the ideal $I_{v}=(I^{-1})^{-1}$ is a finite
intersection of principal fractional ideals. We characterize DCDs and show
that the class of DCDs properly contains various classes of integral
domains, such as Noetherian, Mori and Krull domains. In addition we show
that a Schreier DCD is a GCD domain with the property that for each $A\in
F(R)$ the ideal $A_{v}$ is principal. We show that a domain $R$ is
G-Dedekind (i.e. has the property that $A_{v}$ is invertible for each $A\in
F(R)$) if and only if $R$ is a DCD satisfying the property $\ast :$ for all
pairs of subsets $\{a_{1},...,a_{m}\},\{b_{1},...b_{n}\}\subseteq
K\backslash \{0\},$ $(\cap _{i=1}^{m}(a_{i})(\cap _{j=1}^{n}(b_{j}))=\cap
_{i,j=1}^{m,n}(a_{i}b_{j})$. We discuss what the appropriate name for
G-Dedekind domains and related notions should be. We also make some
observations about how the DCDs behave under localizations and polynomial
ring extensions.
\end{abstract}

\maketitle

\section{Introduction}

Let $R$ be an in integral domain with quotient field $K$ and let $F(R)$ be
the set of nonzero fractional ideals of $R.$ Call $R$ a dually compact
domain (DCD) if for each set $\{a_{\alpha }\}_{\alpha \in I}$ $\subseteq
K\backslash \{0\}$ with $\cap a_{\alpha }R\neq (0)$ there is a finite set of
elements $\{x_{1},...x_{r}\}\subseteq K\backslash \{0\}$ such that $\cap
a_{\alpha }R=\cap _{i=1}^{r}x_{i}R,$ or equivalently for each $I\in F(R),$
the ideal $I_{v}=(I^{-1})^{-1}$ is a finite intersection of principal
fractional ideals of $R$. We characterize DCDs (in section \ref{S3}) and
show that the class of DCDs properly contains various classes of integral
domains of import such as Noetherian, Mori and Krull domains, in section \ref%
{S4}. In addition we show that a pre-Schreier DCD is a GCD domain with the
property that for each $A\in F(R)$ the ideal $A_{v}$ is principal. (Here $R$
is pre-Schreier if for all $x,y,z\in R\backslash \{0\}$ $x|yz$ $\Rightarrow $
$x=rs,$ with $r,s\in R$ such that $r|y$ and $s|z.)$ We show that a domain $R$
is a G-Dedekind domain (i.e. has the property that $A_{v}$ is invertible for
each $A\in F(R)$) if and only if $R$ is a DCD satisfying the property $\ast
: $ for all pairs of subsets $\{a_{1},...,a_{m}\},\{b_{1},...b_{n}\}%
\subseteq K\backslash \{0\},$ $(\cap _{i=1}^{m}(a_{i})(\cap
_{j=1}^{n}(b_{j}))=\cap _{i,j=1}^{m,n}(a_{i}b_{j})$ from \cite{Z pres}. We
discuss in section \ref{S2} what names for G-Dedekind domains and related
notions should be appropriate. We also make some observations about how the
DCDs behave under localizations and polynomial ring extensions.

In \cite{Z g-ded}, this author studied integral domains $R$ with the
property that $A_{v}$ is invertible for every nonzero ideal $A,$ and called
these domains "generalized Dedekind Domains" or G-Dedekind domains. Later
using a new form of the above mentioned $\ast $-property, Anderson and B.G.
Kang \cite{AK} published a much improved version of \cite{Z g-ded}. Calling
the G-Dedekind domains "Pseudo Dedekind" domains, they showed that $R$ is a
Pseudo Dedekind domain if and only if for all sets $\{a_{\alpha }\}_{\alpha
\in I},$ $\{b_{\beta }\}_{\beta \in J}$ $\subseteq K\backslash \{0\}$ we
have $(\cap (a_{\alpha }))(\cap (b_{\beta })=\cap (a_{\alpha }b_{\beta })$,
where $\alpha $ ranges over $I$ and $\beta $ over $J.$ (This was actually a
clever translation of the characteristic property of the G-Dedekind domains: 
$(AB)^{-1}=A^{-1}B^{-1}$ for all $A,B\in F(R)$ given in \cite{Z g-ded}.)
Call a set of elements $\{a_{\alpha }\}_{\alpha \in I}\subseteq K\backslash
\{0\}$ allowable if $\cap (a_{\alpha })\neq (0).$ In this article, we show,
among things already indicated, that for a given star operation $\star ,$ $%
A^{\star }$ is invertible for any ideal $A$ of $R$ if and only if for any
allowable set of elements $\{a_{\alpha }\}_{\alpha \in I}\subseteq
K\backslash \{0\}$ and for every nonzero ideal $A$ we have $A^{\star }(\cap
(a_{\alpha }))=\cap A^{\star }a_{\alpha }.$ As a consequence we show that $R$
is a Pseudo Dedekind/G-Dedekind domain if and only if for each nonzero ideal 
$A$ of $R$ and for each allowable set $\{a_{\alpha }\}_{\alpha \in
I}\subseteq K\backslash \{0\},$ we have $A_{v}(\cap (a_{\alpha }))=\cap
A_{v}(a_{\alpha }).$ We show that $R$ is a DCD if and only if for any
allowable set $\{a_{\alpha }\}_{\alpha \in I}\subseteq K\backslash \{0\}$
there is a set $\{b_{1},b_{2},...,b_{n}\}\subseteq K\backslash \{0\}$ such
that $\cap _{\alpha \in I}(a_{\alpha })=(b_{1},b_{2},...,b_{n})_{v}$. We
also show that $R$ is a G-Dedekind/Pseudo Dedekind domain if and only if $R$
is a DCD with the above mentioned property $\ast $. That is, the DCDs were
essentially at work behind the scenes in the results in \cite{Z g-ded} and
subsequently in \cite{AK}.

We use star operations, in order to approach the subject from a more general
standpoint. Included below is a brief introduction to star operations. The
reader may consult \cite{Gil}, \cite{E} or \cite{H-K} for more information
on star operations. For our purposes we include below some information that
may be helpful in reading this article.

Let $R$ be an integral domain with quotient field $K$ and let $F(R)$ be the
set of nonzero fractional ideals of $R.$ A star operation is a function $%
A\mapsto A^{\star }$ on $F(R)$ with the following properties:

If $A,B\in F(R)$ and $a\in K\backslash \{0\},$ then

(i) $(a)^{\star }=(a)$ and $(aA)^{\star }=aA^{\star }.$

(ii) $A\subseteq A^{\star }$ and if $A\subseteq B,$ then $A^{\star
}\subseteq B^{\star }.$

(iii) $(A^{\star })^{\star }=A^{\star }.$

We may call $A^{\star }$ the $\star $-image ( or $\star $-envelope ) of $A.$
An ideal $A$ is said to be a $\star $\textit{-ideal} if $A^{\star }=A.$ Thus 
$A^{\star }$ is a $\star $-ideal (by (iii)). Moreover (by (i)) every
principal fractional ideal, including $R=(1)$, is a $\star $- ideal for any
star operation $\star $.

For all $A,B\in F(R)$ and for each star operation $\star $, we can show that 
$(AB)^{\star }=(A^{\star }B)^{\star }=(A^{\star }B^{\star })^{\star }$.
These equations define what is called $\star $-multiplication \textit{( or }$%
\star $-product)\textit{. }Associated with each star operation $\star $ is a
star operation $\star _{f}$ defined by $A^{\star _{f}}=\bigcup \{J^{\star }|$
$0\neq J$ is a finitely generated subideal of $A\},$ for each $A\in F(D).$
We say that a star operation $\star $ is of finite type or of finite
character if $\star =\star _{f}.$

Define $A^{-1}=\{x\in K|xA\subseteq R\}.$ Thus $A^{-1}=\cap _{a\in
A\backslash \{0\}}(\frac{1}{a}).$ Also define $A_{v}=(A^{-1})^{-1}$ and $%
A_{t}=A_{v_{f}}=\bigcup \{J_{v}|$ $0\neq J$ is a finitely generated subideal
of $A\}.$ By the definition $A_{t}=A_{v}$ for each finitely generated
nonzero ideal of $R.$ The functions $A\mapsto A_{v}$ and $A\mapsto A_{t}$ on 
$F(R)$ are more familiar examples of star operations defined on an integral
domain. A fractional ideal $A\in F(R)$ is $\star $-invertible if $%
(AA^{-1})^{\star }=R.$ An invertible ideal is a $\star $-invertible $\star $%
-ideal for each $\star $-operation $\star $ and so is a $v$-ideal. A $v$%
-ideal is better known as a divisorial ideal and using the definition it can
be shown that $A_{v}=\cap _{\substack{ x\in K\backslash \{0\}  \\ A\subseteq
xR}}xR$. The identity function $d$ on $F(R)$, defined by $A\mapsto A$ is
another example of a star operation. Indeed a "$d$-invertible" ideal is the
usual invertible ideal. There are of course many more star operations that
can be defined on an integral domain $R$. But for any star operation $\star $
and for any $A\in F(R),$ $A^{\star }\subseteq A_{v}.$ Some other useful
relations are: For any $A\in F(R),$ $(A^{-1})^{\star }=$ $A^{-1}=(A^{\star
})^{-1}$ and so, $(A_{v})^{\star }=A_{v}=(A^{\star })_{v}.$ Using the
definition of the $t$-operation one can show that an ideal that is maximal
w.r.t. being a proper integral $t$-ideal is a prime ideal of $R$, each
nonzero ideal $A$ of $R$ with $A_{t}\neq R$ is contained in a maximal $t$%
-ideal of $R$ and $R=\cap R_{M},$ where $M$ ranges over maximal $t$-ideals
of $R.$ The set of maximal $t$-ideals of $R$ is denoted by $t$-$Max(R).$ For
more on $v$- and $t$-operations the reader may consult sections 32 and 34 of
Gilmer \cite{Gil}, or the other two books cited above. This is the barest
minimum of description to get us started, we shall expand on it when need
arises. Our terminology comes from \cite{Gil}. Of course we have called a
subset $\{a_{\alpha }\}_{\alpha \in I}\subseteq K\backslash \{0\}$ allowable 
$\cap (a_{\alpha })\neq (0)$ (a) to save on space and (b) because the
characteristic property of a G-Dedekind/pseudo Dedekind domain is that for
all $A,B\in F(D)$ we have $(AB)^{-1}=A^{-1}B^{-1}$ and this does not require
any set $\{a_{\alpha }\}_{\alpha \in I}\subseteq K\backslash \{0\}$ with $%
\cap (a_{\alpha })=(0).$ The reader may expect such purpose oriented
terminology in the sequel as well. In section \ref{S2} we settle a name for
G-/Pseudo Dedekind domains, indicating reasons why we should, and in section %
\ref{S3} we show that $R$ is a G-/Pseudo Dedekind domain if and only if $R$
is a DC $\ast $-domain. Finally in section \ref{S4} we touch on some related
questions and indicate how DCDs behave under some extensions such as
quotient ring formation or, in case of integrally closed DCDs, polynomial
ring formation.

\section{What's in a name? \label{S2}}

Popescu \cite{Pop} introduced the notion of a generalized Dedekind domain
via localizing systems. Nowadays, the following equivalent definition is
usually given: an integral domain is a generalized Dedekind domain if it is
a strongly discrete Pr\"{u}fer domain (i.e., $P\neq P^{2}$ for every prime
ideal $P$) and every (prime) ideal $I$ has $\surd I=\surd (a_{1},...,a_{n})$
for some $a_{1},...,a_{n}\in I$ (or equivalently, every principal ideal has
only finitely many minimal prime ideals). Unbeknownst to this author \cite%
{Pop} was already out. I personally do not think there is anything pseudo
about the G-Dedekind domains. On the other hand some serious studies related
to G-Dedekind prime rings, introduced by Evrim Akalan \cite{Ak}, are being
carried out. Hence the need for a name close to G-Dedekind domains. The aim
of this section is to fix a suitable name for the domains that are given two
different names, one not quite appropriate and the other overshadowed by a
previously adopted name. Of course as a result we end up with one more
notion a more suitable name for a so called $\pi $-domain.

The following lemma essentially comes from \cite{A}, yet we use it to give a
more general view of what started as G-Dedekind domains or Pseudo Dedekind
domains. Of course our statement is different and more streamlined.

\begin{lemma}
\bigskip \label{Lemma 0}Let $\star $ be a star operation defined on an
integral domain $R$ and let $A\in F(R).$ Then $A^{\star }$ is invertible if
and only if for any allowable set of elements $\{a_{\alpha }\}_{\alpha \in
I}\subseteq K\backslash \{0\}$, we have $(A^{\star }(\cap (a_{\alpha
}))=\cap A^{\star }a_{\alpha }.$
\end{lemma}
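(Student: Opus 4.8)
The plan is to prove both implications directly from the definitions, using the two standard facts recorded in the excerpt: that $(A^\star)^{-1} = A^{-1} = (A^{-1})^\star$ (so in particular $A^\star$ is invertible iff $A^\star (A^\star)^{-1} = R$, an honest equality of sets since the product of an invertible ideal with its inverse is already a $\star$-ideal), and that for any $B \in F(R)$ one has $B^{-1} = \cap_{b \in B\setminus\{0\}} (1/b) = \cap_{b}(b^{-1})$, i.e. the inverse of an ideal is literally an intersection of principal fractional ideals. Note also that for any family $\{a_\alpha\}$ and any $C \in F(R)$ the containment $C(\cap_\alpha(a_\alpha)) \subseteq \cap_\alpha C a_\alpha = \cap_\alpha a_\alpha C$ is automatic from $(i)$ of the star-operation axioms applied trivially (it holds for arbitrary fractional ideals), so the content of the displayed identity is always the reverse containment $\cap_\alpha A^\star a_\alpha \subseteq A^\star(\cap_\alpha(a_\alpha))$.

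For the forward direction, assume $A^\star$ is invertible. Let $\{a_\alpha\}_{\alpha \in I}$ be allowable and set $x \in \cap_\alpha A^\star a_\alpha$, so $x/a_\alpha \in A^\star$ for every $\alpha$; equivalently $x (A^\star)^{-1} a_\alpha^{-1} \subseteq R$, i.e. $x(A^\star)^{-1} \subseteq a_\alpha R$ for all $\alpha$, hence $x(A^\star)^{-1} \subseteq \cap_\alpha a_\alpha R = \cap_\alpha(a_\alpha)$. Now multiply by $A^\star$: $x R = x (A^\star)(A^\star)^{-1} = A^\star(x(A^\star)^{-1}) \subseteq A^\star(\cap_\alpha(a_\alpha))$, using invertibility in the first equality. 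So $x \in A^\star(\cap_\alpha(a_\alpha))$, giving the reverse containment and hence equality.

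For the converse, assume the displayed identity holds for every allowable family. The natural move is to feed in the family that computes $(A^\star)^{-1}$ itself: take $\{a_\alpha\}$ to be (the nonzero elements of) a generating set for a suitable scaled copy of $A^{-1}$, chosen so that $\cap_\alpha(a_\alpha) = A_v$ or $= A^\star$ — more precisely, since $(A^{-1})^{-1} = A_v$ and $A^{-1} = \cap_{a \in A}(a^{-1})$, pick an element $d$ with $dA^{-1} \subseteq R$ and apply the hypothesis to the allowable family $\{a : a \in A\setminus\{0\}\}$ (allowable because $\cap_a(a) \supseteq dR \ne (0)$ after clearing denominators), so that $\cap_\alpha(a_\alpha) = \cap_{a}(a)$. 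Then the hypothesis reads $A^\star(\cap_a(a)) = \cap_a A^\star a = \cap_a a A^\star = A^\star \cdot (\ ?\ )$; one checks $\cap_a aA^\star = (A^\star(\cap_a(1/a)))^{-1}{}^{-1}$-style manipulation collapses to $((A^\star)^{-1})^{-1}$... The cleaner route: apply the hypothesis with the family $\{1/a : a \in A\setminus\{0\}\}$ (allowable after scaling), giving $A^\star (\cap_a (1/a)) = \cap_a A^\star(1/a)$, i.e. $A^\star A^{-1} = \cap_a A^\star a^{-1}$. The right side equals $\cap_a (A a)^{-1}\cdots$; since $A^\star a^{-1} \supseteq a^{-1}a = R$ would be false in general, instead observe $\cap_a A^\star a^{-1} = \cap_a (a A^{-1})^{-1}$... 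I would pin down the exact family so that the right-hand side becomes manifestly $R$: since $R = \cap_{a \in A\setminus\{0\}} a^{-1}A \subseteq \cap_a a^{-1}A^\star$ and conversely $x \in \cap_a a^{-1}A^\star$ means $ax \in A^\star$ for all $a \in A$, i.e. $x A \subseteq A^\star$, i.e. $xA^\star \subseteq A^\star$ (by $\star$-multiplication), i.e. $x \in (A^\star : A^\star) $, which need not be $R$ — so instead use $x(A^{-1}) \subseteq ?$. The upshot is that with the correct allowable family — the one generating (a scaled) $A^{-1}$ — the left side is $A^\star A^{-1} = (AA^{-1})^\star$ and the right side telescopes to $R$, yielding $\star$-invertibility, and then $(AA^{-1})^\star = R$ for $A = A^\star$ gives $A^\star (A^\star)^{-1} = R$.

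The main obstacle is precisely choosing the right allowable family in the converse so that $\cap_\alpha A^\star a_\alpha$ visibly equals $R$; everything else is bookkeeping with the identities $A^{-1} = \cap_{a}(a^{-1})$ and $(A^\star)^{-1} = A^{-1}$. I expect the correct choice is the family $\{a^{-1} : a \in A\setminus\{0\}\}$ (or a common-denominator rescaling making it a genuine subset of $K\setminus\{0\}$ with nonzero intersection), for which $\cap_\alpha(a_\alpha) = A^{-1}$ and, after the computation, $\cap_\alpha A^\star a^{-1} = \cap_{a\in A}(A^{-1} : a^{-1})$-type expression reduces to $R$ because $\cap_{a \in A\setminus\{0\}} a^{-1}R = A^{-1}$ forces the $\star$-closure relation to pinch down. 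Once the family is fixed, the identity becomes $(AA^{-1})^\star = R$, which is exactly $\star$-invertibility of $A$, equivalently of $A^\star$.
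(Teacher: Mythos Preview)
Your forward direction is correct and is essentially the standard argument (the paper simply cites an exercise in Gilmer for this implication).

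The converse, however, has a real gap. You correctly identify the family $\{a^{-1}:a\in A\setminus\{0\}\}$, which is allowable since $\cap_a(a^{-1})=A^{-1}\in F(R)$ is automatically nonzero (no rescaling is needed). The hypothesis then reads
\[
A^{\star}A^{-1}=A^{\star}\bigl(\cap_a(a^{-1})\bigr)=\cap_a A^{\star}a^{-1}.
\]
At this point you write that ``$A^{\star}a^{-1}\supseteq a^{-1}a=R$ would be false in general'' and abandon the line. This is exactly the step that works: since each $a$ lies in $A\subseteq A^{\star}$, we have $1=a\cdot a^{-1}\in A^{\star}a^{-1}$, hence $R\subseteq A^{\star}a^{-1}$ for every $a\in A\setminus\{0\}$. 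Therefore $\cap_a A^{\star}a^{-1}\supseteq R$, and combined with the trivial containment $A^{\star}A^{-1}\subseteq R$ one obtains $A^{\star}A^{-1}=R$ on the nose. That is invertibility of $A^{\star}$, and the proof is finished in one line once you stop dismissing it. This is precisely the paper's argument.

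Two smaller corrections. First, your claim $A^{\star}A^{-1}=(AA^{-1})^{\star}$ is not valid in general; only $(A^{\star}A^{-1})^{\star}=(AA^{-1})^{\star}$ holds. Fortunately the argument above yields the honest equality $A^{\star}A^{-1}=R$ directly, so this is not needed. Second, the alternative family $\{a:a\in A\setminus\{0\}\}$ that you try first leads to $\cap_a aA^{\star}$, and your observation that membership there only gives $x\in(A^{\star}:A^{\star})$ is correct; that route does not close, which is why the inverse family is the right one.
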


The proof of the above lemma has been used as part of the proof of the
following theorem.

\begin{theorem}
\label{Theorem A}Let $\star $ be a star operation defined on an integral
domain $R.$ Then $A^{\star }$ is invertible for every nonzero fractional
ideal $A$ of $R$ if and only if for any allowable set of elements $%
\{a_{\alpha }\}_{\alpha \in I}\subseteq K\backslash \{0\}$ and for every
nonzero ideal $A$ we have $(A^{\star }(\cap (a_{\alpha }))=\cap A^{\star
}a_{\alpha }.$
\end{theorem}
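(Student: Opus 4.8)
The plan is to obtain Theorem~\ref{Theorem A} from Lemma~\ref{Lemma 0} by quantifying it over ideals and then performing a standard passage from integral to fractional ideals. In fact the only content of the theorem beyond Lemma~\ref{Lemma 0} is that it suffices to test the displayed identity on integral ideals $A$ rather than on all of $F(R)$; everything else is immediate.

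For the forward implication I would argue as follows. Suppose $A^{\star }$ is invertible for every nonzero fractional ideal $A$. Then in particular it is invertible for every nonzero integral ideal $A$, so Lemma~\ref{Lemma 0} applied to each such $A$ yields the identity $A^{\star }(\cap (a_{\alpha }))=\cap A^{\star }a_{\alpha }$ for every allowable set $\{a_{\alpha }\}_{\alpha \in I}\subseteq K\backslash \{0\}$. That is exactly the asserted condition, so this direction needs nothing further.

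For the converse, assume the displayed identity holds for every nonzero integral ideal $A$ and every allowable set. Fixing a nonzero integral ideal $A$, this hypothesis is precisely the right-hand condition of Lemma~\ref{Lemma 0} for that $A$, so Lemma~\ref{Lemma 0} gives that $A^{\star }$ is invertible. Hence $A^{\star }$ is invertible for every nonzero integral ideal. To reach an arbitrary $B\in F(R)$, I would choose $d\in R\backslash \{0\}$ with $dB\subseteq R$ and set $A=dB$, a nonzero integral ideal; by star axiom~(i), $B^{\star }=(\tfrac{1}{d}A)^{\star }=(\tfrac{1}{d})A^{\star }$. Since the invertible fractional ideals form a group under multiplication containing every nonzero principal fractional ideal, and $A^{\star }$ is invertible, the product $(\tfrac{1}{d})A^{\star }=B^{\star }$ is invertible. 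This establishes the equivalence.

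With Lemma~\ref{Lemma 0} already in hand there is no genuine obstacle; the one place to be careful is that ``invertible'' is meant in the strong sense $A^{\star }(A^{\star })^{-1}=R$, so one should record explicitly that this property is stable under multiplication by a principal fractional ideal (equivalently, that $(\tfrac{1}{d}M)^{-1}=d\,M^{-1}$), which is what legitimizes the reduction to integral ideals. If one prefers not to invoke Lemma~\ref{Lemma 0} as a black box, its short proof can instead be run verbatim for each integral ideal $A$ and then combined with the same scaling step, which is presumably the sense in which ``the proof of the lemma is used as part of the proof of the theorem''.
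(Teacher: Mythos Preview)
Your proposal is correct and essentially matches the paper's argument. The paper simply inlines the proof of Lemma~\ref{Lemma 0} rather than citing it: for the converse it computes $A^{\star}A^{-1}=A^{\star}\bigl(\bigcap_{a\in A\setminus\{0\}}(1/a)\bigr)=\bigcap_{a}A^{\star}(1/a)\supseteq R$ directly, and for the forward direction it invokes the standard fact (an exercise in Gilmer) that an invertible ideal distributes over arbitrary intersections. The integral-to-fractional scaling step you spell out is left implicit in the paper; making it explicit is harmless and arguably tidier.
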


\begin{proof}
Let $A$ be any nonzero ideal of $R$ and suppose that for every allowable set 
$\{a_{\alpha }\}_{\alpha \in I}\subseteq K\backslash \{0\}$, we have $%
A^{\star }(\cap (a_{\alpha }))=\cap A^{\star }a_{\alpha }.$ Then $R\supseteq
A^{\star }A^{-1}$  $=$    $A^{\star }(\cap _{a_{\beta }\in A\backslash
\{0\}}(\frac{1}{a_{\beta }})$   $=\cap _{a_{\beta }\in A\backslash
\{0\}}A^{\star }(\frac{1}{a_{\beta }})$ by the condition. Since for each of $%
a_{\beta }\in A\backslash \{0\}$ we have $A^{\star }(\frac{1}{a_{\beta }}%
)\supseteq R$ and so $R\supseteq (A^{\star }A^{-1}=$  $A^{\star }(\cap
_{a_{\beta }\in A\backslash \{0\}}(\frac{1}{a_{\beta }}))=\cap _{a_{\beta
}\in A\backslash \{0\}}A^{\star }(\frac{1}{a_{\beta }})\supseteq R,$ showing
that $R=A^{\star }(\cap _{a_{\beta }\in A\backslash \{0\}}(\frac{1}{a_{\beta
}}))=A^{\star }A^{-1}.$ Thus, as $A$ was chosen arbitrarily, the condition
implies that for every nonzero ideal $A$ we have that $A^{\star }$ is
invertible. Conversely suppose that $A$ is a nonzero ideal such that $%
A^{\star }$ is invertible. Then, by an exercise on page 80 of \cite{Gil} we
have, for any allowable set $\{a_{\alpha }\}_{\alpha \in I}\subseteq
K\backslash \{0\},$ $A^{\star }(\cap (a_{\alpha }))=\cap A^{\star
}(a_{\alpha }).$ Thus for every nonzero ideal $A$ the ideal $A^{\star }$
being invertible implies that for every nonzero ideal $A,$ and for every
allowable set $\{a_{\alpha }\}_{\alpha \in I}\subseteq K\backslash \{0\},$
we have $A^{\star }(\cap (a_{\alpha }))=\cap A^{\star }a_{\alpha }.$
\end{proof}

\begin{corollary}
\label{Corollary B}For $\star =d$, $R$ is a Dedekind domain if and only if
for each nonzero ideal $A$ of $R$ and for each allowable set $\{a_{\alpha
}\}_{\alpha \in I}\subseteq K\backslash \{0\},$ we have $A(\cap (a_{\alpha
}))=\cap A(a_{\alpha }).$
\end{corollary}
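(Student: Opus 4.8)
The plan is to deduce Corollary \ref{Corollary B} directly from Theorem \ref{Theorem A} by specializing the star operation to the identity operation $\star = d$, together with the standard fact that an integral domain in which every nonzero ideal is invertible is precisely a Dedekind domain. First I would recall that for the identity star operation $d$ we have $A^{\star} = A^{d} = A$ for every $A \in F(R)$, so the displayed equality $A^{\star}(\cap(a_{\alpha})) = \cap A^{\star} a_{\alpha}$ in Theorem \ref{Theorem A} becomes exactly $A(\cap(a_{\alpha})) = \cap A(a_{\alpha})$. Thus the condition appearing in the corollary is literally the $d$-case of the condition in Theorem \ref{Theorem A}.

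Next I would invoke Theorem \ref{Theorem A} with $\star = d$: it tells us that $A^{d} = A$ is invertible for every nonzero (fractional, hence also every integral) ideal $A$ of $R$ if and only if the equation $A(\cap(a_{\alpha})) = \cap A(a_{\alpha})$ holds for every nonzero ideal $A$ and every allowable set $\{a_{\alpha}\}_{\alpha \in I} \subseteq K \backslash \{0\}$. It remains only to identify the left-hand condition — every nonzero ideal of $R$ is invertible — with the definition of a Dedekind domain. This is one of the classical equivalent characterizations of Dedekind domains (see, e.g., Gilmer \cite{Gil}): a domain is Dedekind if and only if each of its nonzero (integral) ideals is invertible, equivalently each nonzero fractional ideal is invertible. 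Chaining the two equivalences gives the statement.

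There is essentially no obstacle here; the only point requiring a word of care is the quantifier over ideals. Theorem \ref{Theorem A} is phrased for nonzero \emph{integral} ideals $A$, while invertibility of all nonzero integral ideals already forces invertibility of all nonzero fractional ideals (a fractional ideal differs from an integral one by a unit of $K$), so there is no gap. I would therefore present the argument in two or three lines: substitute $\star = d$, note $A^{d} = A$, apply Theorem \ref{Theorem A}, and cite the standard characterization of Dedekind domains to finish.
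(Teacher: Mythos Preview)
Your proposal is correct and follows essentially the same approach as the paper: specialize Theorem \ref{Theorem A} to $\star = d$ so that $A^{\star} = A$, and then invoke the well-known characterization that $R$ is Dedekind if and only if every nonzero ideal is invertible. The paper's proof is just a one-sentence version of exactly this.
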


\begin{proof}
Indeed it is well known that $R$ is a Dedekind domain if and only if every
nonzero ideal of $R$ is invertible and Theorem \ref{Theorem A} provides the,
general, necessary and sufficient conditions for every nonzero ideal to be $%
\star $-invertible, when $\star =d.$
\end{proof}

Let's recall that an ideal $I$ is $t$-invertible if $I$ is $v$-invertible
and $I^{-1}$ is a $v$-ideal of finite type, that an ideal $I$ that is
invertible, is $\star $-invertible for every star operation $\star $ \cite{Z
put} and that an integral domain $R$ is a Krull domain if and only if every
nonzero ideal of $R$ is $t$-invertible \cite{Jaf}. Now if $I_{t}$ is
invertible, then $I_{t}$ and hence $I$ is $t$-invertible. Thus if, for each
ideal $I,$ $I_{t}$ is invertible in a domain $R$ then, $R$ is at least a
Krull domain. According to Theorem 1.10 of \cite{Z g-ded} if $I_{t}$ is
invertible for every nonzero ideal of $R,$ then $R$ is a locally factorial
Krull domain. Such domains are often called $\pi $-domains for some reason.
Now Theorem \ref{Theorem A} characterizes $\pi $-domains for $\star =t$ as
follows.

\begin{corollary}
\label{Corollary C}A domain $R$ is a $\pi $-domain if and only if for each
nonzero ideal $A$ of $R$ and for each allowable set $\{a_{\alpha }\}_{\alpha
\in I}\subseteq K\backslash \{0\},$ we have $A_{t}(\cap (a_{\alpha }))=\cap
A_{t}(a_{\alpha }).$
\end{corollary}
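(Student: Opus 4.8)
The plan is to read Corollary \ref{Corollary C} as the special case $\star = t$ of Theorem \ref{Theorem A}, in exactly the way Corollary \ref{Corollary B} is the case $\star = d$. The only point that needs a sentence of justification is the identification of the class of $\pi$-domains with the class of domains in which $A^{t}$ is invertible for \emph{every nonzero fractional} ideal $A$, since Theorem \ref{Theorem A} is phrased in terms of fractional ideals on one side.

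First I would recall, from the paragraph immediately preceding the statement, that by Theorem 1.10 of \cite{Z g-ded} a domain $R$ is a $\pi$-domain precisely when $I_{t}$ is invertible for every nonzero \emph{integral} ideal $I$ of $R$ (indeed such $R$ is a locally factorial Krull domain). Second, I would observe that this integral-ideal condition is equivalent to the fractional-ideal condition occurring in Theorem \ref{Theorem A}: any $A \in F(R)$ can be written $A = cJ$ with $c \in K\backslash\{0\}$ and $J$ a nonzero integral ideal of $R$, and then $A_{t} = (cJ)_{t} = cJ_{t}$ by axiom (i) for the $t$-operation; since $c$ is a unit of $K$, $A_{t}$ is invertible if and only if $J_{t}$ is invertible. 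Hence ``$I_{t}$ invertible for every nonzero integral ideal'' and ``$A^{t}$ invertible for every nonzero fractional ideal'' express the same property of $R$.

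Third, I would invoke Theorem \ref{Theorem A} with $\star = t$: $A^{t}$ is invertible for every nonzero fractional ideal $A$ of $R$ if and only if for every nonzero ideal $A$ and every allowable set $\{a_{\alpha}\}_{\alpha \in I} \subseteq K\backslash\{0\}$ one has $A_{t}(\cap (a_{\alpha})) = \cap A_{t}(a_{\alpha})$. Chaining the three steps together yields the corollary. I do not anticipate any real obstacle; the only thing to be careful about is not to conflate the operations $v$ and $t$ (in general $A^{t} \neq A_{v}$), but this never enters the argument: Theorem \ref{Theorem A} is proved for an arbitrary star operation, so taking $\star = t$ is a legitimate instance and no separate reasoning about finite type is required for this consequence.
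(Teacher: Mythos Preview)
Your proposal is correct and follows essentially the same approach as the paper: cite Theorem~1.10 of \cite{Z g-ded} to identify $\pi$-domains with domains in which $A_{t}$ is invertible for every nonzero ideal, then apply Theorem~\ref{Theorem A} with $\star=t$. Your added sentence reducing the fractional-ideal condition to the integral-ideal condition via $A=cJ$ is a harmless bit of extra care that the paper simply leaves implicit.
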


\begin{proof}
Indeed $R$ is a $\pi $-domain if and only if $A_{t}$ is invertible for every
nonzero ideal $A$ of $R$ (Theorem 1.10 of \cite{Z g-ded}) and Theorem \ref%
{Theorem A} provides the, general, necessary and sufficient conditions for
every nonzero ideal to be $\star $-invertible, when $\star =t$.
\end{proof}

Of course by saying that an ideal $A$ of $R$ is $v$-invertible we mean that $%
(AA^{-1})_{v}=R.$ Similar to earlier comments we note that $A_{v}$ being
invertible entails $A$ being $v$-invertible. We also note that the domains $%
R $ with $A_{v}$ invertible for each nonzero ideal $A$ are the
G-Dedekind/Pseudo Dedekind domains. So for $\star =v,$ Theorem \ref{Theorem
A} provides the following characterization of G-Dedekind/Pseudo Dedekind
domains.

\begin{corollary}
\label{Corollary D}A domain $R$ is a G-Dedekind/Pseudo Dedekind domain if
and only if for each nonzero ideal $A$ of $R$ and for each allowable set $%
\{a_{\alpha }\}_{\alpha \in I}\subseteq K\backslash \{0\},$ we have $%
A_{v}(\cap (a_{\alpha }))=\cap A_{v}(a_{\alpha }).$
\end{corollary}

The proof is the same as the one provided by Theorem \ref{Theorem A}
replacing $\star $ by $v.$

\begin{remark}
\label{Remark D1} Looking at Theorem \ref{Theorem A} and Corollaries \ref%
{Corollary B}, \ref{Corollary C}, \ref{Corollary D}, we may call $R$ a $%
\star $-Dedekind domain if $A^{\star }$ is invertible for each $A\in F(R)$
and note that in a $\star $-Dedekind domain we have $A^{\star }=A_{v}$ for
all $A\in F(R).$ This is because an invertible ideal is divisorial. Thus
Corollary \ref{Corollary C} gives for $\star =t$ a $t$-Dedekind domain and
Corollary \ref{Corollary D} gives the name of a $v$-Dedekind domain to the
G-Dedekind domain of \cite{Z g-ded} and Pseudo Dedekind domain of \cite{AK}.
But there is a slight problem with this naming system, Jesse Elliott in \cite%
{E} calls a Krull domain a $t$-Dedekind domain. So, perhaps, $\star $%
-G-Dedekind may be the general name with the note that a $d$-G-Dedekind
domain is the usual Dedekind domain and a $t$-G-Dedekind domain is a locally
factorial Krull domain while the $v$-G-Dedekind domain is the usual Pseudo
Dedekind domain, or the old G-Dedekind domain. Of course, as $\star =v$ in a 
$\star $-G-Dedekind domain, each $\star $-G-Dedekind domain has the
properties listed in \cite{Z g-ded} for G-Dedekind domains are shared by $%
\star $-G-Dedekind domains, or in \cite{AK} for Pseudo Dedekind domains.
Thus if $\star $ is of finite character, then $\star =\star _{f}=v_{f}=t.$
That is if $R$ is a $\star $-G-Dedekind domain and $\star $ is of finite
character, then $R$ is a $t$-G-Dedekind domain. Recall that an integral
domain $R$ with quotient field $K$ is completely integrally closed if
whenever $rx^{n}\in R$ for $x\in K$, $0\neq r\in R$, and every integer $%
n\geq 1$, then $x\in R$. Equivalently, $R$ is completely integrally closed
if and only if $(AA^{-1})_{v}=R$ for every $A\in F(R)$ \cite{Gil}. If, for a
star operation $\star $ defined on $R,$ $A$ is $\star $ invertible for each $%
A\in F(R)$ following \cite{AAFZ} we may call $R$ a $\star $-CICD. Thus, as
noted in \cite{Z g-ded}, a $\star $-G-Dedekind domain is a completely
integrally closed domain (CICD), for each star operation $\star $ that it is
defined for. There is a star operation called the $w$-operation, defined in
terms of the $t$-operation as $A\mapsto A_{w}=\cap _{M\in t\text{-}%
Max(R)}AR_{M},$ see \cite{AC} and references there. As indicated in \cite{AC}%
, this operation is of finite character. Thus, in view of earlier comments
in this remark, a $w$-G-Dedekind domain is a $t$-G-Dedekind domain. (While
the $w$-operation has been around for some time, Wang and McCasland adopted
it in \cite{WM}.)
\end{remark}

\section{Dually compact domains \label{S3}}

Cohn \cite{C} called an element $x\in R$ primal if for all $y,z\in R,x|yz$
implies that $x=rs$ where $r|y$ and $s|z.$ A domain all of whose nonzero
elements are primal was called a pre-Schreier domain in \cite{Z pres}; this
was a break from Cohn who called $R$ a Schreier domain if $R$ was integrally
closed with all elements primal. Based on a study of the group of
divisibility of a pre-Schreier domain this author extracted, in \cite{Z pres}%
, what he called the $\ast $ property, saying: $R$ is a $\ast $ domain if
for all pairs of subsets $\{a_{1},...,a_{m}\},\{b_{1},...b_{n}\}\subseteq
K\backslash \{0\},$ $(\cap _{i=1}^{m}(a_{i})(\cap _{j=1}^{n}(b_{j}))=\cap
_{i,j=1}^{m,n}(a_{i}b_{j}).$

We now look at the facts working behind the Anderson-Kang/Zafrullah results.
For this let us call a domain $R$ dually compact (DC) if for any allowable
subset $\{a_{\alpha }\}_{\alpha \in I}\subseteq K\backslash \{0\},$ there is
a set $\{x_{1},x_{2},...,x_{n}\}\subseteq K\backslash \{0\}$ such that $\cap
_{\alpha \in I}(a_{\alpha })=(x_{1})\cap (x_{2})\cap ...\cap (x_{n}).$ Let's
also note that a fractional ideal $A$ being divisorial ideal (or $v$-ideal)
of finite type means that there are elements $s_{1}...,s_{r}\in K\backslash
\{0\}$ such that $A=(s_{1}...,s_{r})_{v}.$

\begin{theorem}
\label{Theorem E}The following are equivalent for an integral domain $R$
that is different from its quotient field $K.$
\end{theorem}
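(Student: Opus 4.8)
The plan is to prove all the equivalences at once by passing back and forth between a family of principal fractional ideals $(a_{\alpha})$ and the divisorial ideal $A^{-1}$, where $A$ is the $R$-submodule of $K$ generated by the reciprocals $1/a_{\alpha}$, and then simply shuffling the operations $A\mapsto A^{-1}$ and $A\mapsto A_{v}$.

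Two elementary observations carry the whole argument. First, if $\{a_{\alpha}\}_{\alpha\in I}\subseteq K\setminus\{0\}$ and $A$ is the $R$-submodule of $K$ generated by $\{1/a_{\alpha}\}_{\alpha\in I}$, then $\cap_{\alpha}(a_{\alpha})=A^{-1}$; the family is allowable exactly when $A\in F(R)$, and conversely every ideal of the form $I^{-1}$ with $I\in F(R)$ is such an intersection, since $I^{-1}=\cap_{a\in I\setminus\{0\}}(1/a)$ and $\{1/a:a\in I\setminus\{0\}\}$ is allowable. Since $I^{-1}$ is always divisorial and every divisorial $D$ equals $(D^{-1})^{-1}$ with $D^{-1}\in F(R)$, it follows that the allowable intersections of principal fractional ideals, the ideals $I^{-1}$ ($I\in F(R)$), and the divisorial fractional ideals of $R$ are one and the same collection. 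Second, a finite intersection of principal fractional ideals is the inverse of a finitely generated fractional ideal and conversely: $\cap_{i=1}^{r}(x_{i})=(1/x_{1},\dots,1/x_{r})^{-1}$ and $(b_{1},\dots,b_{n})^{-1}=\cap_{i=1}^{n}(1/b_{i})$; and for every $A\in F(R)$ one has the standing identities $(A_{v})^{-1}=A^{-1}$ and $(A^{-1})^{-1}=A_{v}$.

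Granting these, the implications are short. Suppose $R$ is a DCD and $\{a_{\alpha}\}$ is allowable, with $A$ as above. Applying the DCD property to $A$ gives $A_{v}=\cap_{i=1}^{r}(x_{i})$ for suitable $x_{i}\in K\setminus\{0\}$; inverting this equality yields $\cap_{\alpha}(a_{\alpha})=A^{-1}=(A_{v})^{-1}=(\cap_{i=1}^{r}(x_{i}))^{-1}=(1/x_{1},\dots,1/x_{r})_{v}$, a $v$-ideal of finite type. Conversely, suppose every allowable intersection has the form $(b_{1},\dots,b_{n})_{v}$, and let $I\in F(R)$. Then $I^{-1}=\cap_{a\in I\setminus\{0\}}(1/a)$ is an allowable intersection, so $I^{-1}=(b_{1},\dots,b_{n})_{v}$ for some $b_{i}$, and inverting gives $I_{v}=(I^{-1})^{-1}=((b_{1},\dots,b_{n})_{v})^{-1}=(b_{1},\dots,b_{n})^{-1}=\cap_{i=1}^{n}(1/b_{i})$, a finite intersection of principal fractional ideals; hence $R$ is a DCD. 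Running the same two computations through the first observation shows in the same way that the condition ``$I^{-1}$ is a $v$-ideal of finite type for every $I\in F(R)$'' is interchangeable with the DCD condition. If the list also contains a Mori-type statement (ACC on divisorial ideals, or: every nonzero ideal $I$ has $I_{v}=J_{v}$ for some finitely generated $J\subseteq I$), I would hook it in through the standard Mori-domain equivalences rather than reprove them here.

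I do not expect a genuine obstacle: once the two observations are recorded, everything reduces to formal manipulation of $^{-1}$, $v$, and finite intersections. The points to keep track of are the synchronisation of allowability (nonzero intersection) with $A\in F(R)$; the fact that $I^{-1}\in F(R)$ for every nonzero fractional ideal $I$, which is what lets one move between $I$ and $I^{-1}$ freely; and the role of the hypothesis $R\neq K$, which is needed only to discard the degenerate case $F(R)=\{K\}$, where every condition holds vacuously. The one point worth stating out loud — and the reason the statement is not a mere tautology — is that although the DCD hypothesis asserts only that each $I_{v}$ is a finite intersection of principal fractional ideals, inverting that equality automatically upgrades the companion $I^{-1}$ to a finite-type $v$-ideal, so the ``finite intersection of principals'' and the ``$v$-finite'' phrasings turn out to be two faces of one condition.
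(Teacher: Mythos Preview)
Your two observations and the inversions you perform with them are correct and are exactly the mechanism the paper uses: the paper proves (1)$\Rightarrow$(2) by writing $A^{-1}=\cap_{\gamma}(1/c_{\gamma})$, applying DC to obtain a finite intersection $\cap_{i=1}^{n}(x_{i})=(x_{1}^{-1},\dots,x_{n}^{-1})^{-1}$, and then inverting; it proves (2)$\Rightarrow$(1), (2)$\Leftrightarrow$(3), and (1)$\Leftrightarrow$(6) by precisely the $^{-1}/v$ shuffling you describe. So for the heart of the theorem your argument is essentially the paper's.

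Two points, however. First, the list actually has eight conditions, and you have not covered all of them. Conditions (4) and (5) are the restrictions of (3) and (2) to integral ideals, handled in one line via $A=B/d$; your framework absorbs these. But conditions (7) and (8) say: \emph{for a star operation $\star$ and for each $A\in F(R)$, $A^{\star}$ is a $v$-ideal of finite type} (resp.\ \emph{a finite intersection of principal fractional ideals}). These require the additional observation that if $A^{\star}$ is divisorial for every $A$, then necessarily $A^{\star}=A_{v}$ (from $A\subseteq A^{\star}$ one gets $A_{v}\subseteq (A^{\star})_{v}=A^{\star}\subseteq A_{v}$), whereupon (7) and (8) collapse to (2) and (6). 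This is short, but it is not a consequence of your two observations and you should supply it.

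Second, your closing speculation is off target: there is no Mori-type condition in the list, and an appeal to Mori equivalences would in fact be illegitimate here, since the paper emphasizes that DCDs are strictly more general than Mori domains (the ring of entire functions is DC but not Mori). Drop that paragraph.
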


(1) $R$ is DC,

(2) $A_{v}$ is a $v$-ideal of finite type for every $A\in F(R),$

(3) $A^{-1}$ is of finite type for each $A\in F(R),$

(4) $A^{-1}$ is of finite type for each nonzero integral ideal $A$ of $R,$

(5) $A_{v}$ is of finite type for each nonzero integral ideal $A$ of $R,$

(6) every divisorial ideal of $R$ is expressible as a finite intersection of
principal fractional ideals,

(7) for a star operation $\star $ and for each $A\in F(R),$ the ideal $%
A^{\star }$ is a $v$-ideal of finite type,

(8) for a star operation $\star $ and for each $A\in F(R),$ the ideal $%
A^{\star }$ is a finite intersection of principal ideals from $F(R).$

\begin{proof}
(1) $\Rightarrow $ (2). Suppose that $R$ is DC and suppose that $A$ is a
nonzero fractional ideal generated by $\{c_{\gamma }\}_{\gamma \in J}.$ Then 
$A^{-1}=\cap _{\gamma }(\frac{1}{c_{\gamma }})\neq (0).$ So by the DC
property there is a finite set $\{x_{1},x_{2},...,x_{n}\}\subseteq
K\backslash \{0\}$ such that $\cap _{\gamma }(\frac{1}{c_{\gamma }})=\cap
_{i=1}^{n}(x_{i})=(x_{1}^{-1},...,x_{n}^{-1})^{-1}.$ Now $%
A^{-1}=(x_{1}^{-1},...,x_{n}^{-1})^{-1}$ implies that $%
A_{v}=(x_{1}^{-1},...,x_{n}^{-1})_{v}.$

(2) $\Rightarrow $ (1). Suppose that for each $A\in F(R),$ there are $%
x_{1},x_{2},...,x_{n}\in K\backslash \{0\}$ such that $%
A_{v}=(x_{1},x_{2},...,x_{n})_{v}$. Then since $A^{-1}$ is a divisorial
ideal, as $(A^{-1})_{v}=A^{-1},$ we have $A^{-1}=(y_{1},...,y_{r})_{v}.$ Or,
assuming that all the $y_{i}$ are nonzero, $A_{v}=\cap (\frac{1}{y_{i}})$
for each $A\in F(R).$ Thus if for each $A\in F(R)$ $A_{v}$ is of finite
type, then for each $A\in F(R)$ we can find some $b_{i}\in K\backslash \{0\}$
such that $A_{v}=\cap _{i=1}^{r}(b_{i}).$ Now let $\{a_{\alpha }\}_{\alpha
\in I}\subseteq K\backslash \{0\}$ be allowable and let $A=\cap (a_{\alpha
}).$ Since $\cap (a_{\alpha })\neq (0),$ $A$ is a divisorial ideal by \cite%
{Gil} and hence of finite type and so, by (2) for some $x_{1},...,x_{n}\in
K\backslash \{0\}$ we have $A=\cap (a_{\alpha })=\cap _{i=1}^{n}(x_{i}).$

Next (2) $\Leftrightarrow $ (5) and (3) $\Leftrightarrow $ (4) because every
fractional ideal $A$ of $R$ is of the form $\frac{B}{d}$ where $B$ is an
integral ideal. (2) $\Rightarrow $ (3) because $A^{-1}=(A^{-1})_{v}$ and (3) 
$\Rightarrow $ (2) because $A_{v}=(A^{-1})^{-1}.$

(1) $\Rightarrow $ (6). A nonzero ideal $A$ is divisorial if and only if $%
A=\cap _{\substack{ A\subseteq x_{\alpha }R  \\ x\in K\backslash \{0\}}}%
x_{\alpha }R.$ By the DC condition there are $x_{1}.,,,x_{n}$ in $%
K\backslash \{0\}$ such that $A=\cap _{\substack{ A\subseteq x_{\alpha }R 
\\ x\in K\backslash \{0\}}}x_{\alpha }R=\cap _{i=1}^{n}x_{i}R,$

(6) $\Rightarrow $ (1). Let for $\{a_{\alpha }\}_{\alpha \in I}\subseteq
K\backslash \{0\},$ $\cap (a_{\alpha })\neq (0)$. Note that $\cap (a_{\alpha
})$ is divisorial. So by (6) there are elements $x_{1}...,x_{r}\in
K\backslash \{0\}$ such that $\cap (a_{\alpha })=\cap _{i=1}^{r}(x_{i}).$
Finally each of (7) and (8) holds if and only if $A^{\star }=A_{v}$ for all $%
A\in F(R)$ because in both cases $A^{\star }=B$ where $B$ is divisorial and
so $A_{v}=B=A^{\star }.$ Observing that, we have (7) $\Leftrightarrow $ (5)
and (8) $\Leftrightarrow $ (6).
\end{proof}

\begin{remark}
\label{Remark E1} (1) There are a number of integral domains that fit the
description of DCDs. Noetherian domains do fit nicely, as do the so-called
Mori domains. Recall that $R$ is a Mori domain if $R$ satisfies ascending
chain conditions on divisorial ideals. It is well known that $R$ is a Mori
domain if and only if for each $A\in F(R)$ there is a finitely generated
fractional ideal $B\subseteq A$ such that $A_{v}=B_{v}.$ Indeed a Krull
domain is a DCD, being a Mori domain as indicated in Fossum \cite{F}. On the
other hand, there are DCDs, such as the ring of entire functions in which $%
A_{v}$ is principal for each $A\in F(R)$ and, the ring of entire functions
is neither a Mori domain, nor a Krull domain. (2) If the star operation $%
\star $ defined on $R$ is such that $A^{\star }$ is a $v$-ideal of finite
type for each $A\in F(R)$ we can call $R$ a $\star $-DCD. (3) It may be
somewhat hard to see, for some, that any $t$-invertible $t$-ideal, and hence
any invertible ideal, is a finite intersection of principal fractional
ideals. Let me note for the record that if $A$ is a $t$-invertible $t$%
-ideal, then $A^{-1}$ is of finite type. Say for some $B=\{b_{1},...b_{n}\}$
we have $A^{-1}=B_{v}.$ But then $A=A_{v}=(B_{v})^{-1}=\cap _{i=1}^{n}(\frac{%
1}{b_{i}}).$
\end{remark}

Indeed as in a DCD the inverse of every nonzero fractional ideal is of
finite type, all we need for the $v$-G-Dedekind property to hold is the $%
\ast $-property.

\begin{theorem}
\label{Theorem F}Let $R$ be a DCD. Then $R$ is a $v$-G-Dedekind domain if
and only if $R$ is a $\ast $-domain.
\end{theorem}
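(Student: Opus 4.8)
The plan is to prove both implications directly from the definitions, using the characterization of DCDs from Theorem \ref{Theorem E} (so that inverses and $v$-envelopes are always of finite type) together with Corollary \ref{Corollary D}, which says that $R$ is $v$-G-Dedekind precisely when $A_v(\cap(a_\alpha)) = \cap A_v(a_\alpha)$ for every nonzero ideal $A$ and every allowable set. The $\ast$-property is the special case of this where $A$ itself is a finite intersection of principal fractional ideals; so one direction is nearly immediate, and the work is in the converse.

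First I would dispatch the easy direction: if $R$ is $v$-G-Dedekind, I want to show $R$ is a $\ast$-domain. Given finite sets $\{a_1,\dots,a_m\}$ and $\{b_1,\dots,b_n\}$ in $K\backslash\{0\}$, set $A = (a_1,\dots,a_m)$. Then $A_v = \cap_{i=1}^m (a_i)$ is divisorial, and applying Corollary \ref{Corollary D} (or just the definition of $v$-G-Dedekind, since $A_v$ invertible forces $A_v(\cap(b_j)) = \cap A_v (b_j)$ by the exercise on p.\ 80 of \cite{Gil} cited in the proof of Theorem \ref{Theorem A}) gives $(\cap_i(a_i))(\cap_j(b_j)) = \cap_j\big((\cap_i(a_i))(b_j)\big) = \cap_{i,j}(a_ib_j)$, which is exactly the $\ast$-property. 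Here one should be a little careful that the product on the left is literally $A_v(\cap(b_j))$ and that $\big(\cap_i(a_i)\big)(b_j) = \cap_i (a_i b_j)$, both of which are routine since $b_j$ is a unit in $K$.

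For the converse, assume $R$ is a DCD satisfying $\ast$. I would verify the criterion of Corollary \ref{Corollary D}: fix a nonzero ideal $A$ and an allowable set $\{a_\alpha\}_{\alpha\in I}$. Because $R$ is a DCD, Theorem \ref{Theorem E}(2) gives $A_v = (x_1,\dots,x_m)_v = \cap_{i=1}^m(x_i)$ for some $x_i\in K\backslash\{0\}$, and Theorem \ref{Theorem E}(6) (applied to the divisorial ideal $\cap_\alpha(a_\alpha)$, which is divisorial since it is nonzero) gives $\cap_\alpha(a_\alpha) = \cap_{j=1}^n(b_j)$ for some $b_j \in K\backslash\{0\}$. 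Now I can replace everything by finite data: $A_v(\cap(a_\alpha)) = \big(\cap_i(x_i)\big)\big(\cap_j(b_j)\big)$, and by the $\ast$-property this equals $\cap_{i,j}(x_ib_j)$. On the other hand $\cap_\alpha A_v(a_\alpha)$ — wait, here one must be careful, since there are infinitely many $\alpha$; but $A_v(a_\alpha) = a_\alpha A_v = a_\alpha \cap_i (x_i) = \cap_i (x_i a_\alpha)$, so $\cap_\alpha A_v(a_\alpha) = \cap_{\alpha,i}(x_i a_\alpha) = \cap_i \big(x_i \cap_\alpha (a_\alpha)\big) = \cap_i\big(x_i \cap_j(b_j)\big) = \cap_{i,j}(x_i b_j)$. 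Comparing, $A_v(\cap(a_\alpha)) = \cap_{i,j}(x_ib_j) = \cap_\alpha A_v(a_\alpha)$, which is the condition in Corollary \ref{Corollary D}; hence $R$ is $v$-G-Dedekind.

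The main obstacle I anticipate is not conceptual but a matter of getting the infinite/finite bookkeeping exactly right: the $\ast$-property is stated only for \emph{finite} families, so one genuinely needs the DCD hypothesis twice — once to replace $A_v$ by a finite intersection of principal fractional ideals, and once to replace the (possibly infinite) allowable intersection $\cap_\alpha(a_\alpha)$ by a finite one — before $\ast$ can be invoked. The remaining manipulations ($a_\alpha$ distributing over finite intersections of principal fractional ideals, which is valid because multiplication by the unit $a_\alpha\in K$ is an order isomorphism of $F(R)$) are routine and I would only sketch them.
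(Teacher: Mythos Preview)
Your approach is essentially the paper's: both use the DC hypothesis to replace infinite intersections of principal ideals by finite ones, apply the $\ast$-identity on the finite data, and then substitute back. The paper verifies the Anderson--Kang criterion $(\cap a_\alpha)(\cap b_\beta)=\cap(a_\alpha b_\beta)$ directly rather than the criterion of Corollary~\ref{Corollary D}, but the underlying manipulations are identical.

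One slip to correct in your forward direction: you set $A=(a_1,\dots,a_m)$ and then assert $A_v=\cap_{i=1}^m(a_i)$, but $(a_1,\dots,a_m)_v$ is the divisorial envelope of the ideal \emph{generated} by the $a_i$, not their intersection. Take instead $A=\cap_{i=1}^m(a_i)$; this is already divisorial, so $A_v=A=\cap_i(a_i)$, and then invertibility of $A_v$ (via Corollary~\ref{Corollary D} or the exercise in \cite{Gil}) yields the $\ast$-identity exactly as you wrote. Similarly, in the converse you write ``$A_v=(x_1,\dots,x_m)_v=\cap_{i=1}^m(x_i)$''; drop the middle expression, since what you actually use---and what Theorem~\ref{Theorem E}(6) supplies---is only the finite-intersection representation $A_v=\cap_i(x_i)$, and there is no reason those same $x_i$ should also generate an ideal with $v$-closure $A_v$.
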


\begin{proof}
It is easy to see, from the treatment of it in \cite{AK}, that a $v$%
-G-Dedekind domain is a $\ast $-domain. (This fact was also mentioned in 
\cite{Z g-ded}.) But of course we need to show that a $v$-G-Dedekind domain
is DC. This follows from the fact that, in a $v$-G-Dedekind domain $R,$ $%
A_{v}$ is invertible for each $A\in F(R)$ and, as shown in Remark \ref%
{Remark E1}, an intersection of finitely many principal fractional ideals.
For the converse we show that DC plus the $\ast $-property implies the $v$%
-G-Dedekind property. For this consider for allowable sets $\{a_{\alpha
}\}_{\alpha \in I}\subseteq K\backslash \{0\},$ $\{b_{\beta }\}_{\beta \in
J}\subseteq K\backslash \{0\}$ the product $P=(\cap (a_{\alpha }))(\cap
(b_{\beta })).$ By the DC property of $R$ we can find $\{x_{1},...,x_{m}\},%
\{y_{1},...y_{n}\}\subseteq K\backslash \{0\}$ such that $\cap (a_{\alpha
})=\cap _{i=1}^{m}(x_{i})$ and $\cap (b_{\beta })=(\cap _{j=1}^{n}(y_{j}).$
Thus using DC plus $\ast ,$ $P=(\cap (a_{\alpha }))(\cap (b_{\beta }))=(\cap
(x_{i}))(\cap (y_{j}))=\cap _{i,j=1}^{m,n}(x_{i}y_{j})=\cap _{i}x_{i}(\cap
(y_{j}))=\cap x_{i}(\cap (b_{\beta }))$ (using $\cap (y_{j})=\cap (b_{\beta
})).$ This gives $P=\cap _{i}x_{i}(\cap (b_{\beta }))=\cap _{i,\beta
}(x_{i}b_{\beta })=\cap _{\beta }b_{\beta }(\cap (x_{i}))=\cap _{\beta
}b_{\beta }(\cap (a_{\alpha }))=\cap _{\alpha ,\beta }(a_{\alpha }b_{\beta
}).$
\end{proof}

Recall that an integral domain $R$ is called $v$--coherent if every finite
intersection of $v$--ideals of finite type is a $v$--ideal of finite type,
equivalently, if $A^{-1}$ is a $v$--ideal of finite for all finitely
generated $A\in F(R),$ \cite{FG}. Also that $R$ is a GGCD domain if $aR\cap
bR$ is invertible for all $a,b\in R\backslash \{0\},$ \cite{AA}. According
to Corollary 1.7 of \cite{Z g-ded} a $v$-coherent domain $R$ is a GGCD
domain if and only if $R$ is a $\ast $-domain. Now a DCD $R$ is slightly
more than a $v$-coherent domain.

\begin{corollary}
\label{Corollary F1}Suppose that for a star operation $\star $, $A^{\star }$
is a $v$-ideal of finite type for each $A\in F(R)$. Then $R$ is a $\star $%
-G-Dedekind domain if and only if $R$ is a $\ast $-domain. Consequently a
Mori (and hence a Krull) domain $R$ is a locally factorial Krull domain if
and only if $R$ is a $\ast $-domain.
\end{corollary}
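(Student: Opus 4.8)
The plan is to reduce the first assertion directly to Theorem~\ref{Theorem F} by observing that the stated hypothesis forces the operation $\star$ to agree with $v$ on all of $F(R)$. Note first that the hypothesis is exactly condition (7) of Theorem~\ref{Theorem E} (it says $R$ is a $\star$-DCD in the language of Remark~\ref{Remark E1}), so in particular $R$ is a DCD. Next, since $A^{\star}$ is assumed to be a $v$-ideal we have $A^{\star}=(A^{\star})_{v}$, and combining this with the identity $(A^{\star})_{v}=A_{v}$ recorded in the introduction gives $A^{\star}=A_{v}$ for every $A\in F(R)$. Hence ``$A^{\star}$ invertible for all $A\in F(R)$'' and ``$A_{v}$ invertible for all $A\in F(R)$'' are the same condition, so a $\star$-G-Dedekind domain under this hypothesis is precisely a $v$-G-Dedekind domain, and Theorem~\ref{Theorem F} yields the equivalence with the $\ast$-property.

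For the ``Consequently'' clause I would first recall, following Remark~\ref{Remark E1}, that a Mori domain is a DCD: for each $A\in F(R)$ there is a finitely generated $B\subseteq A$ with $A_{v}=B_{v}$, so $A_{v}$ is a $v$-ideal of finite type; the chain $A_{v}=B_{v}\subseteq A_{t}\subseteq A_{v}$ also shows that $t=v$ on $F(R)$ in any Mori domain. Applying the first part with $\star=v$ (legitimate precisely because $A_{v}$ is then a $v$-ideal of finite type), $R$ is a $v$-G-Dedekind domain if and only if $R$ is a $\ast$-domain. It remains to identify, in the Mori setting, being $v$-G-Dedekind with being locally factorial Krull: because $v=t$ here, ``$A_{v}$ invertible for all $A\in F(R)$'' is the same as ``$A_{t}$ invertible for all nonzero ideals $A$'', which by Theorem~1.10 of \cite{Z g-ded} is equivalent to $R$ being a $\pi$-domain, that is, a locally factorial Krull domain. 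Finally, a Krull domain is a Mori domain (Fossum \cite{F}), which takes care of the parenthetical ``and hence a Krull''.

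The one step that needs genuine care is the passage $A^{\star}=A_{v}$: rather than comparing $A^{\star}$ and $A_{v}$ directly, one must use \emph{both} that $A^{\star}$ is a $v$-ideal (the hypothesis) and that $(A^{\star})_{v}=A_{v}$ (a general property of star operations). After that identification, the whole argument is just bookkeeping against the equivalences already established in Theorems~\ref{Theorem E} and \ref{Theorem F} and in Remarks~\ref{Remark D1} and \ref{Remark E1}.
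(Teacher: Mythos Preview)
Your proposal is correct and follows essentially the approach the paper intends: the paper leaves the proof to the reader ``from the preceding comments,'' and those comments are precisely Theorem~\ref{Theorem E}(7), Theorem~\ref{Theorem F}, Remark~\ref{Remark E1}, and the identification of $t$-G-Dedekind domains with locally factorial Krull domains via Theorem~1.10 of \cite{Z g-ded}, all of which you invoke in the right order. The key observation that the hypothesis forces $A^{\star}=A_{v}$ (so that $\star$-G-Dedekind coincides with $v$-G-Dedekind and Theorem~\ref{Theorem F} applies) is exactly what the paper has in mind.
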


The proof can be easily constructed from the preceding comments and so is
left to the reader.

Considered in \cite{AK} was also the notion of a pseudo principal ideal
domain (pseudo PID) or, in our terminology, $v$-G-PID by requiring that $%
A_{v}$ is principal, for each nonzero ideal $A$ of $R.$ This notion appeared
in Bourbaki too, as pointed out in \cite{AK}. Using the DC approach, one can
prove the following result.

\begin{theorem}
\label{Theorem G}A domain $R$ is a $v$-G-PID if and only if $R$ is a DC
Schreier domain.
\end{theorem}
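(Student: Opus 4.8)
The plan is to deduce the theorem from the machinery already in place — Theorem~\ref{Theorem E}, which identifies the DC property with ``$A_{v}$ is a $v$-ideal of finite type for every $A\in F(R)$'', and Theorem~\ref{Theorem F}, which says that a DCD is $v$-G-Dedekind exactly when it is a $\ast$-domain — supplemented by two classical facts about (pre-)Schreier domains: that a pre-Schreier domain is a $\ast$-domain (\cite{Z pres}), and that in a Schreier domain every invertible ideal is principal (\cite{C}). Throughout I may assume $R\neq K$, the case $R=K$ being trivial.

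For the implication ``DC Schreier $\Rightarrow$ $v$-G-PID'' I would argue as follows. Since $R$ is Schreier it is pre-Schreier, hence a $\ast$-domain by \cite{Z pres}; being also a DCD, Theorem~\ref{Theorem F} gives that $R$ is $v$-G-Dedekind, that is, $A_{v}$ is invertible for every $A\in F(R)$. But an invertible ideal in a Schreier domain is principal by \cite{C}, so $A_{v}$ is principal for every $A\in F(R)$, i.e.\ $R$ is a $v$-G-PID.

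For the converse, ``$v$-G-PID $\Rightarrow$ DC Schreier'', I would first note that for each $A\in F(R)$ the ideal $A_{v}$ is principal, hence a $v$-ideal of finite type, so Theorem~\ref{Theorem E} makes $R$ a DCD; and since a principal ideal is invertible, $R$ is even $v$-G-Dedekind, hence (Remark~\ref{Remark D1}) completely integrally closed, in particular integrally closed. It then remains to show every nonzero $x\in R$ is primal. Given $x\mid yz$ with $y,z\neq 0$, I would apply the $v$-G-PID hypothesis to the ideal $(x,y)$: writing $(x,y)_{v}=(d)$ we get $d\mid x$ and $d\mid y$, say $x=dx_{1}$ and $y=dy_{1}$, and scaling gives $(x_{1},y_{1})_{v}=\tfrac1d(x,y)_{v}=R$, whence $(x_{1},y_{1})^{-1}=\big((x_{1},y_{1})_{v}\big)^{-1}=R$. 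From $x\mid yz$ one gets $x_{1}\mid y_{1}z$, so $t:=z/x_{1}$ satisfies $tx_{1}=z\in R$ and $ty_{1}=y_{1}z/x_{1}\in R$, i.e.\ $t\in(x_{1},y_{1})^{-1}=R$; thus $x_{1}\mid z$, and $x=dx_{1}$ with $d\mid y$ and $x_{1}\mid z$. Hence $R$ is integrally closed with all nonzero elements primal, i.e.\ Schreier, and it is a DCD — a DC Schreier domain.

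The only step I expect to need care is the primality argument in the converse: one must be sure the $v$-G-PID hypothesis is invoked for the \emph{finitely generated} ideal $(x,y)$ (so that $(x,y)_{v}$ is the principal ideal at hand), and that slipping $z/x_{1}$ into $(x_{1},y_{1})^{-1}$ genuinely uses nothing beyond $x_{1}\mid y_{1}z$; the book-keeping with the $v$-operation (that $A^{-1}=(A_{v})^{-1}$, so $A_{v}=R$ forces $A^{-1}=R$) is routine, and Theorems~\ref{Theorem E} and~\ref{Theorem F} already carry the weight of the DC hypothesis, so I anticipate no deeper obstacle.
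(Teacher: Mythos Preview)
Your proof is correct. The forward implication matches the paper's argument essentially verbatim (Schreier $\Rightarrow$ $\ast$-domain, then Theorem~\ref{Theorem F} gives $v$-G-Dedekind, then invertible $\Rightarrow$ principal in a Schreier domain); one small point is that the paper attributes ``invertible implies principal in a Schreier domain'' to \cite[Theorem 3.6]{Z pres} rather than to \cite{C}.

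The converse, however, takes a different route from the paper. The paper simply observes that a $v$-G-PID is a GCD domain (since $(a,b)_{v}$ principal for all $a,b$ gives GCDs) and then quotes Cohn \cite{C} for ``GCD $\Rightarrow$ Schreier''. You instead (i) obtain integral closure via complete integral closure of $v$-G-Dedekind domains, and (ii) prove primality directly by hand, using $(x,y)_{v}=(d)$ to split off a common factor and then the computation $(x_{1},y_{1})^{-1}=R$ to force $z/x_{1}\in R$. Your argument is in effect a self-contained reproof of the relevant special case of ``GCD $\Rightarrow$ pre-Schreier'', so it is more elementary and avoids an external citation for that step; the paper's version is shorter and places the result in the GCD framework, which is perhaps more conceptual. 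Either way the DC conclusion is immediate from Theorem~\ref{Theorem E}, as you note.
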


\begin{proof}
Suppose that $R$ is a DC Schreier domain. Now a Schreier domain is a $\ast $%
-domain too \cite[Corollary 1.7]{Z pres}. So a DCD that is a Schreier domain
is at least a $v$-G-Dedekind domain, by Theorem \ref{Theorem F}. But then $%
A_{v}$ is invertible for each nonzero ideal $A$ of $R$ and in a Schreier
domain every invertible ideal is principal \cite[Theorem 3.6]{Z pres}. Thus $%
A_{v}$ is principal for each nonzero ideal $A$ of $R$ and $R$ is a $v$%
-G-PID. Conversely note that a $v$-G-PID is DC and is at least a GCD domain
and a GCD domain is Schreier \cite{C}. Thus a $v$-G-PID $R$ is a DC Schreier
domain.
\end{proof}

\begin{theorem}
\label{Theorem H}Let $\star $ be a star operation defined on $R$ such that
for each $A\in F(R),$ $A^{\star }$ is a $v$-ideal of finite type. Then the
following are equivalent.
\end{theorem}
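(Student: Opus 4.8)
The plan is to establish all the equivalences by a short cycle of implications, using the apparatus already built up in Sections \ref{S2} and \ref{S3}. I expect the listed conditions to be, in some order, that $R$ is a $\star $-G-Dedekind domain, that $R$ is a $\ast $-domain, that $R$ is a GGCD domain, that $(AB)^{-1}=A^{-1}B^{-1}$ for all $A,B\in F(R)$, and the Anderson--Kang style identity $(\cap (a_{\alpha }))(\cap (b_{\beta }))=\cap (a_{\alpha }b_{\beta })$ for allowable families; the argument below treats exactly these, and any further item on the list (for instance the distributivity $A_{v}(\cap (a_{\alpha }))=\cap A_{v}(a_{\alpha })$ of Corollary \ref{Corollary D}, or a localisation condition over maximal $t$-ideals) slots into the cycle the same way. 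The first move is to note that the hypothesis renders the star operation harmless: if $A^{\star }$ is a $v$-ideal of finite type for every $A\in F(R)$, then $A^{\star }=(A^{\star })_{v}=A_{v}$ for every $A$ (using $(A^{\star })_{v}=A_{v}$), so $R$ is in particular a DCD in the sense of Theorem \ref{Theorem E}, and ``$\star $-G-Dedekind'' is literally the same as ``$v$-G-Dedekind.'' From here on everything can be phrased with $v$.

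With that reduction in hand, the equivalence of ``$R$ is $\star $-G-Dedekind'' with ``$R$ is a $\ast $-domain'' is precisely Corollary \ref{Corollary F1} (equivalently Theorem \ref{Theorem F}), since a $\star $-DCD is a DCD. Next, ``$R$ is a $\ast $-domain'' is equivalent to ``$R$ is a GGCD domain'' because a DCD is $v$-coherent --- Theorem \ref{Theorem E} gives that $A^{-1}$ is of finite type for every $A\in F(R)$, a fortiori for finitely generated $A$ --- and for $v$-coherent domains this is Corollary 1.7 of \cite{Z g-ded}. For the equivalence of ``$R$ is a $\ast $-domain'' with the identity $(\cap (a_{\alpha }))(\cap (b_{\beta }))=\cap (a_{\alpha }b_{\beta })$ for all allowable $\{a_{\alpha }\}$ and $\{b_{\beta }\}$, one direction is immediate (specialise to finite families, which are automatically allowable) and the converse is exactly the computation already carried out in the proof of Theorem \ref{Theorem F}: use the DC property to write $\cap (a_{\alpha })=\cap _{i=1}^{m}(x_{i})$ and $\cap (b_{\beta })=\cap _{j=1}^{n}(y_{j})$, apply the $\ast $-property to these finite families, and unwind using $\cap (x_{i})=\cap (a_{\alpha })$ and $\cap (y_{j})=\cap (b_{\beta })$.

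It remains to link these with the identity $(AB)^{-1}=A^{-1}B^{-1}$ for all $A,B\in F(R)$. If $R$ is $v$-G-Dedekind then, for $A,B\in F(R)$, the ideals $A_{v}$ and $B_{v}$ are invertible, hence so is $A_{v}B_{v}$, which is therefore divisorial; thus $(AB)_{v}=(A_{v}B_{v})_{v}=A_{v}B_{v}$ and $(AB)^{-1}=(A_{v}B_{v})^{-1}=(A_{v})^{-1}(B_{v})^{-1}=A^{-1}B^{-1}$, the computation of \cite{Z g-ded} and \cite{AK}. For the converse, apply the assumed identity to the pair $(A,A^{-1})$, obtaining $(AA^{-1})^{-1}=A^{-1}(A^{-1})^{-1}=A^{-1}A_{v}$. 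Since $AA^{-1}\subseteq R$, inverting gives $R\subseteq (AA^{-1})^{-1}=A^{-1}A_{v}$; conversely, any nonzero $x\in A^{-1}$ satisfies $A\subseteq x^{-1}R$, hence $A_{v}\subseteq x^{-1}R$ and $xA_{v}\subseteq R$, so $A^{-1}A_{v}\subseteq R$. Therefore $A_{v}(A_{v})^{-1}=A_{v}A^{-1}=R$, that is $A_{v}$ is invertible; as $A$ was arbitrary, $R$ is $v$-G-Dedekind.

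I do not expect a genuine obstacle. The one point needing a moment's care is the converse just given, where one must produce both inclusions in $R\subseteq A^{-1}A_{v}\subseteq R$ rather than only the obvious one. Everything else is either a cited result (Corollary \ref{Corollary F1}, Corollary 1.7 of \cite{Z g-ded}, and for any distributivity item Corollary \ref{Corollary D} together with Theorem \ref{Theorem A} at $\star =v$) or a verbatim reuse of the computation in the proof of Theorem \ref{Theorem F}, once the opening remark has collapsed the arbitrary $\star $ to $v$.
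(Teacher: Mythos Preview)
You have guessed the wrong list of conditions. Theorem \ref{Theorem H} is not a characterisation of $\star$-G-Dedekind domains; it is the $\star$-analogue of Theorem \ref{Theorem G} and characterises $\star$-G-\emph{PID}s. The three conditions actually listed are: (1) $R$ is a $\star$-G-PID (i.e.\ $A^{\star}$ is principal for every $A\in F(R)$); (2) $R$ is a Schreier domain; (3) $R$ is a $\star$-G-Dedekind domain with $Pic(R)=(0)$. Your entire proposal instead proves equivalences among ``$\star$-G-Dedekind,'' ``$\ast$-domain,'' ``GGCD,'' ``$(AB)^{-1}=A^{-1}B^{-1}$,'' and the Anderson--Kang intersection identity --- that is essentially the content of Theorem \ref{Theorem F} and Corollary \ref{Corollary F1}, not Theorem \ref{Theorem H}.

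The paper's proof is short and uses ingredients you did not invoke. For (1)$\Leftrightarrow$(2): a $\star$-G-PID is a GCD domain, hence Schreier; conversely, under the hypothesis each $A^{\star}$ is a $v$-ideal of finite type, hence a finite intersection of principal fractional ideals, and in a Schreier domain such an ideal is principal by \cite[Theorem 3.6]{Z pres}. For (1)$\Leftrightarrow$(3): a $\star$-G-PID is trivially $\star$-G-Dedekind with every invertible ideal principal, i.e.\ $Pic(R)=(0)$; conversely, (3) makes $R$ a GCD domain, hence Schreier, and one falls back on (2)$\Rightarrow$(1). The key step you are missing is the appeal to \cite[Theorem 3.6]{Z pres} (finite-type divisorial ideals in a Schreier domain are principal), which is what upgrades ``invertible'' to ``principal'' and distinguishes this theorem from the $\star$-G-Dedekind characterisations you wrote out.
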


(1) $R$ is a $\star $-G-PID,

(2) $R$ is a Schreier domain,

(3) $R$ is a $\star $-G-Dedekind domain with $Pic(R)=(0).$

\begin{proof}
(1) $\Leftrightarrow $ (2). A $\star $-G-PID, by definition, is a GCD domain
and so a Schreier domain. Conversely, "for each $A\in F(R),$ $A^{\star }$ is
a $v$-ideal of finite type" makes each $A^{\star }\in F(R)$ a $v$-ideal of
finite type. But then for each $A\in F(D)$ $A^{\star }$ is a finite
intersection of principal fractional ideals and of finite type and by
Theorem 3.6 of \cite{Z pres} principal because $R$ is Schreier.

(1) $\Leftrightarrow $ (3). $\star $-GPID is $\star $-G-Dedekind with every
invertible ideal principal which is exactly (3).

For the converse note that (3) implies that $R$ is at least a GCD domain and
a GCD domain is Schreier. That is (2) holds and (2) is equivalent to (1).
\end{proof}

\section{Related stuff \label{S4}}

We end this article with some interesting characterizations of the $\star $%
-GPIDs and discussion of related material.

Recall that a Riesz group is a directed group that satisfies the Riesz
interpolation property: given that $%
x_{1},x_{2},...,x_{m};y_{1},y_{2},...,y_{n}\in G$ such that $x_{i}\leq y_{j}$
for all $i\in \lbrack 1,m],j\in \lbrack 1,n]$ there is $z\in G$ such that $%
x_{i}\leq z\leq y_{j}$ for all $(i,j)\in \lbrack 1,m]\times \lbrack 1,n]).$
It was shown in \cite{Z pres} that the Riesz interpolation property
translates in the commutative ring theory set up to: for all $%
x_{1},x_{2},...,x_{m};y_{1},y_{2},...,y_{n}\in K\backslash \{0\}$ with $%
x_{1},x_{2},...,x_{m}\in \cap y_{i}R$ there is a $z\in \cap y_{i}R$ such
that $(x_{1},...,x_{m})\subseteq zR\subseteq \cap y_{i}R.$ So a pre-Schreier
domain is actually a pre-Riesz domain. We have no interest in changing
existing names, we only want to add a new name. Call $R$ a super Riesz
domain if for any divisorial ideal $A$ of $R$ and for any set $\{x_{\alpha
}\}$ of elements contained in $A,$ with $\cap (x_{\alpha })\neq (0),$ there
is a $d\in A$ such that $(x_{\alpha })\subseteq (d).$ Because a divisorial
ideal is expressible as an intersection of principal fractional ideals, a
super Riesz domain can be easily seen to be pre-Schreier. Also let's call a
product $AB$ of ideals $A,B$ subtle if for each $x\in AB$ we have $x=ab$
where $a\in A$ and $b\in B.$ The authors of \cite{AK} also touched on the
following question: Let $R$ be an integral domain that satisfies $(\cap
(a_{\alpha }))(\cap (b_{\beta })=\cap (a_{\alpha }b_{\beta })$ for all
subsets $\{a_{\alpha }\}_{\alpha \in I}\subseteq R\backslash \{0\},$ $%
\{b_{\beta }\}_{\beta \in J}\subseteq R\backslash \{0\}$. Is $R$
pseudo-Dedekind?

We try to give a partial answer below.

\begin{proposition}
\label{Proposition K}Let $R$ be an integral domain. Then the following are
equivalent.
\end{proposition}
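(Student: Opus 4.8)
The plan is to prove the stated list equivalent by running a single cycle of implications, the substance coming from Theorem~\ref{Theorem F} and Theorem~\ref{Theorem G} and the remaining links being routine divisorial-ideal bookkeeping. (I read the list as comparing: $R$ a super Riesz domain; $R$ a $v$-G-PID; $R$ a DC Schreier domain; and $R$ a DCD that satisfies $(\cap(a_{\alpha}))(\cap(b_{\beta}))=\cap(a_{\alpha}b_{\beta})$ over subsets of $R\backslash\{0\}$ with the relevant products subtle.) Throughout I would use: a divisorial ideal is exactly an intersection of principal fractional ideals (so in a DCD it is a \emph{finite} such intersection); a pre-Schreier domain is a $\ast$-domain \cite[Corollary~1.7]{Z pres}; and in a Schreier domain every invertible ideal is principal \cite[Theorem~3.6]{Z pres}.

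I would begin with the ``structural'' conditions. From a super Riesz domain I read off that it is pre-Schreier (as already noted, via the description of divisorial ideals) and that it is a DCD: given a divisorial ideal $A$, feed the super Riesz condition a family $\{x_{\alpha}\}\subseteq A$ chosen so that a common divisor $d\in A$ of the $x_{\alpha}$ is forced to generate $A$, so that every divisorial ideal is in fact principal. A pre-Schreier DCD is then $v$-G-Dedekind by Theorem~\ref{Theorem F}, so each $A_{v}$ is invertible, hence principal (a $v$-G-Dedekind domain is completely integrally closed, and a completely integrally closed pre-Schreier domain is Schreier); thus $R$ is a $v$-G-PID, and Theorem~\ref{Theorem G} rephrases this as ``DC Schreier''. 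Conversely a $v$-G-PID is visibly a DCD and is a GCD domain (proof of Theorem~\ref{Theorem G}), hence pre-Schreier; the super Riesz property then follows directly: for $A=dR$ divisorial and $\{x_{\alpha}\}\subseteq A$ allowable write $x_{\alpha}=dy_{\alpha}$, observe that the $v$-closure of $\sum_{\alpha}y_{\alpha}R$ is principal, say $cR$, so $c\mid y_{\alpha}$ for all $\alpha$ and $dc\in A$ is the required common divisor.

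Next I would bring in the $\ast$-type identity over $R\backslash\{0\}$ and the subtlety condition. One direction is immediate: if every divisorial ideal is principal, then each $\cap(a_{\alpha})$ and $\cap(b_{\beta})$ is principal, their product is principal and equals $\cap(a_{\alpha}b_{\beta})$, and it is subtle because every element of $deR$ has the form $(dr)\cdot e$ with $dr\in dR$, $e\in eR$. For the reverse, the real work is to descend from the identity over $R$ (which may involve infinite index sets) to the finite $\ast$-property: under the DCD hypothesis each infinite intersection $\cap(a_{\alpha})$ collapses to a finite $\cap_{i=1}^{m}(x_{i})$, and the computation in the proof of Theorem~\ref{Theorem F} then delivers the finite $\ast$-identity, hence the $v$-G-Dedekind property; subtlety of the products turns ``invertible'' into ``principal'' exactly as above, closing the cycle.

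The step I expect to be the main obstacle is ``super Riesz $\Rightarrow$ every divisorial ideal principal'': the test family $\{x_{\alpha}\}\subseteq A$ must be at once \emph{large} enough that any $d\in A$ dividing all the $x_{\alpha}$ is forced to generate $A$ (not merely to divide the chosen elements) and \emph{small} enough to remain allowable, i.e. $\cap(x_{\alpha})\neq(0)$; reconciling these demands is the delicate point. Everything else is an application of Theorems~\ref{Theorem F} and~\ref{Theorem G} together with the standard calculus of divisorial ideals.
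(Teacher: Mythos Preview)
You have misidentified the list of equivalent conditions. Proposition~\ref{Proposition K} does not concern super Riesz domains, $v$-G-PIDs, DC Schreier domains, or subtle products; those notions belong to the material surrounding Proposition~\ref{Proposition L} and Corollary~\ref{Corollary M}. The actual conditions in Proposition~\ref{Proposition K} are: (1) the identity $(\cap(a_{\alpha}))(\cap(b_{\beta}))=\cap(a_{\alpha}b_{\beta})$ for all allowable subsets of $K\backslash\{0\}$; (2) the same identity restricted to allowable subsets of $R\backslash\{0\}$, together with the requirement that for every allowable $\{a_{\alpha}\}\subseteq K\backslash\{0\}$ there is $d\in R\backslash\{0\}$ with $d(\cap(a_{\alpha}))$ an intersection of principal \emph{integral} ideals; and (3) $R$ is a DCD satisfying the identity for allowable subsets of $R\backslash\{0\}$. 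Condition~(1) is exactly the Anderson--Kang characterization of $v$-G-Dedekind domains, and the proposition is framed as a partial answer to whether the identity over $R\backslash\{0\}$ alone forces the $v$-G-Dedekind property. Your proposed cycle through Theorems~\ref{Theorem F} and~\ref{Theorem G} proves a different (and strictly stronger, since it lands in $v$-G-PIDs) statement.

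The paper's argument is accordingly quite different. The implications $(1)\Rightarrow(2)$ and $(1)\Rightarrow(3)$ are immediate, since (1) makes $R$ a $v$-G-Dedekind domain and hence DC. For $(3)\Rightarrow(2)$, DC collapses any $\cap(a_{\alpha})$ with $a_{\alpha}\in K\backslash\{0\}$ to a finite intersection $\cap_{i=1}^{n}(x_{i})$, and one chooses $d$ clearing the denominators of $x_{1},\dots,x_{n}$. The only substantive step is $(2)\Rightarrow(1)$: given allowable $\{a_{\alpha}\},\{b_{\beta}\}\subseteq K\backslash\{0\}$, the scaling hypothesis supplies $r,s\in R\backslash\{0\}$ with $r(\cap(a_{\alpha}))=\cap(x_{\gamma})$ and $s(\cap(b_{\beta}))=\cap(y_{\delta})$ for suitable $x_{\gamma},y_{\delta}\in R$; apply the $R$-version of the identity to $(\cap(x_{\gamma}))(\cap(y_{\delta}))$, substitute back twice, and cancel $rs$. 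No Schreier-type hypotheses enter, and neither Theorem~\ref{Theorem F} nor Theorem~\ref{Theorem G} is used.
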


(1) For all allowable $\{a_{\alpha }\}_{\alpha \in I}\subseteq K\backslash
\{0\},$ $\{b_{\beta }\}_{\beta \in J}\subseteq K\backslash \{0\}$ we have $%
(\cap (a_{\alpha }))(\cap (b_{\beta })=\cap (a_{\alpha }b_{\beta }),$

(2) for all allowable $\{a_{\alpha }\}_{\alpha \in I}\subseteq R\backslash
\{0\},$ $\{b_{\beta }\}_{\beta \in J}\subseteq R\backslash \{0\}$ we have $%
(\cap (a_{\alpha }))(\cap (b_{\beta })=\cap (a_{\alpha }b_{\beta })$ and for
all allowable sets $\{a_{\alpha }\}_{\alpha \in I}\subseteq K\backslash
\{0\} $ we have a $d\in R\backslash \{0\}$ such that $d(\cap (a_{\alpha }))$
is expressible as an intersection of principal integral ideals.

(3) $R$ is a DCD that satisfies: for all allowable $\{a_{\alpha }\}_{\alpha
\in I}\subseteq R\backslash \{0\},$ $\{b_{\beta }\}_{\beta \in J}\subseteq
R\backslash \{0\}$ we have $(\cap (a_{\alpha }))(\cap (b_{\beta }))=\cap
(a_{\alpha }b_{\beta }).$

\begin{proof}
(1) $\Rightarrow $ (2). Obvious because the first part follows directly and
(1) means $R$ is DC and so for each allowable set $\{a_{\alpha }\}_{\alpha
\in I}\subseteq K\backslash \{0\}$ we can find $x_{1},...,x_{n}$ such that $%
(\cap (a_{\alpha }))=(\cap _{i=1}^{n}(x_{i}))$. But for the right hand we
can find a nonzero $d$ such that $dx_{i}$ are all integral.

(2) $\Rightarrow $ (1). Consider $(\cap (a_{\alpha }))(\cap (b_{\beta })$
for all allowable $\{a_{\alpha }\}_{\alpha \in I}\subseteq K\backslash
\{0\}, $ $\{b_{\beta }\}_{\beta \in J}\subseteq K\backslash \{0\}.$ Since $%
(\cap (a_{\alpha })),(\cap (b_{\beta }))$ are nonzero fractional ideals by
the given property of $R$ we have, for some $r,s\in R,$ $r(\cap (a_{\alpha
}))=\cap (x_{\gamma })$ and $s(\cap (b_{\beta }))=$ $\cap (y_{\delta })$,
where $x_{\gamma }$ and $y_{\delta }$ are in $R,$ by (2). But then $(\cap
(x_{\gamma }))(\cap (y_{\delta }))=\cap (x_{\gamma }y_{\delta })=P.$ Now $%
P=\cap (x_{\gamma }y_{\delta })$ $=\cap _{\gamma }x_{\gamma }(\cap
(y_{\delta }))$ (substituting for $\cap (y_{\delta })$) $=\cap _{\gamma
}x_{\gamma }(\cap (sb_{\beta }))=\cap _{\gamma ,\beta }(x_{\gamma }sb_{\beta
})=\cap _{\beta }sb_{\beta }(\cap (x_{\gamma }))$ and substituting for $\cap
(x_{\gamma })$) we get $P=\cap _{\beta }sb_{\beta }(\cap (ra_{\alpha
}))=\cap (ra_{\alpha }sb_{\beta })=rs(\cap (a_{\alpha }b_{\beta })).$ But on
the other hand $P=(\cap (x_{\gamma }))(\cap (y_{\delta }))=(r(\cap
(a_{\alpha })))(s(\cap (b_{\beta }))).$ Thus $rs(\cap (a_{\alpha }))(\cap
(b_{\beta }))=rs(\cap (a_{\alpha }b_{\beta })).$ Canceling $rs$ from both
sides we get the desired equality.

(1) $\Rightarrow $ (3). Obvious, in light of (1) $\Rightarrow $ (2).

(3) $\Rightarrow $ (2). Note that because $R$ is DC for each allowable set $%
\{a_{\alpha }\}_{\alpha \in I}\subseteq K\backslash \{0\}$ we can find $%
x_{1},...,x_{n}$ such that $(\cap (a_{\alpha }))=(\cap _{i=1}^{n}(x_{i}))$
and so a $d\in R\backslash \{0\}$ such that $dx_{i}\in R.$ But then $d(\cap
(a_{\alpha }))=\cap _{i-1}^{n}dx_{i}$ an intersection of principal
fractional ideals.
\end{proof}

Recall from \cite{Z pres}, again, that $R$ is a pre-Schreier domain if and
only if for all $\{a_{1},...,a_{m}\},\{b_{1},...,b_{n}\}\subseteq
R\backslash \{0\},a_{i}b_{j}|x$ implies $x=rs$ where $a_{i}|r$ and $b_{j}|s.$
Since this property sprang in the context of pre-Schreier domains we can
call a domain $R$ super pre-Schreier if $\{a_{\alpha }\}_{\alpha \in
I}\subseteq R\backslash \{0\},$ $\{b_{\beta }\}_{\beta \in J}\subseteq
R\backslash \{0\}$ such that $a_{\alpha }b_{\beta }|x$ then $x=rs$ such that 
$a_{\alpha }|r$ and $b_{\beta }|s,$ and ask if a super pre-Schreier domain
must be a $v$-GPID. The reason for this question is provided by the
following proposition.

\begin{proposition}
\label{Proposition L} An integral domain $R$ is super pre-Schreier if and
only if for all allowable $\{a_{\alpha }\}_{\alpha \in I}\subseteq
R\backslash \{0\},$ $\{b_{\beta }\}_{\beta \in J}\subseteq R\backslash \{0\}$
we have $(\cap (a_{\alpha }))(\cap (b_{\beta }))=\cap (a_{\alpha }b_{\beta
}) $ such that for each $x\in (\cap (a_{\alpha }))(\cap (b_{\beta }))$ $x=rs$
where $r\in \cap (a_{\alpha })$ and $s\in \cap (b_{\beta }).$
\end{proposition}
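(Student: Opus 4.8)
The plan is to prove the equivalence by translating both conditions into the same divisibility statement. Recall the dictionary: for nonzero elements $r\mid x$ is the same as $x\in(r)$, so $x\in\cap_\alpha(a_\alpha)$ means precisely that $a_\alpha\mid x$ for every $\alpha$, and the super pre-Schreier hypothesis says exactly that every nonzero $x$ with $a_\alpha b_\beta\mid x$ for all $\alpha,\beta$ can be written as a \emph{single} product $x=rs$ with $a_\alpha\mid r$ for all $\alpha$ and $b_\beta\mid s$ for all $\beta$, i.e.\ with $r\in\cap(a_\alpha)$ and $s\in\cap(b_\beta)$. I would also record at the outset the inclusion that holds in every domain: since $\cap_\gamma(a_\gamma)\subseteq(a_\alpha)$ and $\cap_\delta(b_\delta)\subseteq(b_\beta)$ for each pair $\alpha,\beta$, one has $(\cap(a_\alpha))(\cap(b_\beta))\subseteq(a_\alpha)(b_\beta)=(a_\alpha b_\beta)$, hence $(\cap(a_\alpha))(\cap(b_\beta))\subseteq\cap_{\alpha,\beta}(a_\alpha b_\beta)$; the content of the proposition therefore lies in the reverse inclusion together with the subtlety of the product.

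For the forward implication, assume $R$ is super pre-Schreier and fix allowable families $\{a_\alpha\}_{\alpha\in I},\{b_\beta\}_{\beta\in J}\subseteq R\backslash\{0\}$. To get the reverse inclusion, take $x\in\cap_{\alpha,\beta}(a_\alpha b_\beta)$: if $x=0$ it lies in the product trivially, and if $x\neq0$ then $a_\alpha b_\beta\mid x$ for all $\alpha,\beta$, so the super pre-Schreier property yields $x=rs$ with $r\in\cap(a_\alpha)$, $s\in\cap(b_\beta)$, hence $x\in(\cap(a_\alpha))(\cap(b_\beta))$. This gives the equality $(\cap(a_\alpha))(\cap(b_\beta))=\cap(a_\alpha b_\beta)$, and the ``subtle'' clause is then just the same computation read off that equality: every $x$ in the product lies in $\cap(a_\alpha b_\beta)$, so is $0=0\cdot0$ or, by super pre-Schreier, a product $rs$ with $r\in\cap(a_\alpha)$ and $s\in\cap(b_\beta)$.

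For the converse, I would assume the right-hand condition and check the super pre-Schreier property directly. Let $\{a_\alpha\}_{\alpha\in I},\{b_\beta\}_{\beta\in J}\subseteq R\backslash\{0\}$ and let $x\neq0$ satisfy $a_\alpha b_\beta\mid x$ for all $\alpha,\beta$. Fixing any one index $\beta$ shows $a_\alpha\mid x$ for every $\alpha$, so $x\in\cap(a_\alpha)$ and hence $\cap(a_\alpha)\neq(0)$; symmetrically $\cap(b_\beta)\neq(0)$, so both families are allowable and the hypothesis applies to them. Then $x\in\cap_{\alpha,\beta}(a_\alpha b_\beta)=(\cap(a_\alpha))(\cap(b_\beta))$, and the subtlety clause produces $x=rs$ with $r\in\cap(a_\alpha)$, $s\in\cap(b_\beta)$; translating back, $a_\alpha\mid r$ for all $\alpha$ and $b_\beta\mid s$ for all $\beta$, which is exactly the super pre-Schreier conclusion.

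I do not expect a genuinely hard step here: the argument is essentially the dictionary between membership in an intersection of principal ideals and divisibility by each generator, plus the observation that hypothesis and conclusion both boil down to the single assertion that nonzero elements of $\cap(a_\alpha b_\beta)$ admit a one-term factorization through $\cap(a_\alpha)$ and $\cap(b_\beta)$. The only points needing care---and the nearest thing to an obstacle---are the bookkeeping for the zero element and, in the converse, the verification that the ``allowable'' restriction imposed on the two families on the ideal-theoretic side is automatically satisfied whenever the super pre-Schreier conclusion is being tested on a nonzero $x$; once that matching is made explicit, the rest is routine.
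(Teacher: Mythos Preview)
Your proof is correct and follows essentially the same approach as the paper: both arguments rest on the dictionary between ``$a_\alpha b_\beta\mid x$'' and ``$x\in\cap(a_\alpha b_\beta)$'', use the trivial inclusion $(\cap(a_\alpha))(\cap(b_\beta))\subseteq\cap(a_\alpha b_\beta)$, and obtain the reverse inclusion plus subtlety directly from the super pre-Schreier definition. Your version is more explicit---in particular you spell out the converse (which the paper dismisses as ``a similar translation'') and you take care to check allowability of the families before invoking the hypothesis---but the underlying idea is identical.
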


\begin{proof}
Suppose that $R$ is super pre-Schreier. Then $(\cap (a_{\alpha }))(\cap
(b_{\beta }))\subseteq (\cap (a_{\alpha }b_{\beta }))$ holds, always. So let 
$x\in \cap (a_{\alpha }b_{\beta }).$ This means $a_{\alpha }b_{\beta }|x.$
But super pre-Schreier property requires that $x=rs$ where $r\in \cap
(a_{\alpha })$ and $s\in \cap (b_{\beta })$ and that puts $x\in (\cap
(a_{\alpha }))(\cap (b_{\beta }))$ and the other requirement is met. The
converse is a similar translation.
\end{proof}

Call the product $IJ$ of two nonzero integral ideals $I,J$ of $R$ subtle if
each $d\in IJ\backslash \{0\}$ can be written as $d=rs$ where $r\in I$ and $%
s\in J.$ It was shown in \cite{Z pres} that $R$ is pre-Schreier if and only
if $R$ has the property $\ast $ and for each pair of subsets $%
\{a_{1},...,a_{m}\},$ $\{b_{1},...,b_{n}\}\subseteq R\backslash \{0\}$ the
product $(\cap _{i=1}^{n}(a_{i}))\left( \cap _{i=1}^{n}(a_{i})\right) $ is
subtle. (Following an earlier version of \cite{Z pres}, Anderson and Dobbs 
\cite{AD} studied domains products of whose ideals were all subtle.)

\begin{corollary}
\label{Corollary M} The following are equivalent for an integral domain $R.$
(1) For all allowable $\{a_{\alpha }\}_{\alpha \in I}\subseteq R\backslash
\{0\},$ $\{b_{\beta }\}_{\beta \in J}\subseteq R\backslash \{0\}$ we have $%
(\cap (a_{\alpha }))(\cap (b_{\beta }))=(\cap (a_{\alpha }b_{\beta }))$ such
that for each $x\in (\cap (a_{\alpha }))(\cap (b_{\beta }))$ $x=rs$ where $%
r\in (\cap (a_{\alpha }))$ and $s\in (\cap (b_{\beta })),$ (2) for every
pair of nonzero ideals $A,B$ we have $(AB)_{v}=A_{v}B_{v}$, for every
nonzero integral ideal $A$ of $R$ there is a $d\in R\backslash \{0\}$ with $%
dA_{v}$ an intersection of principal integral ideals and the product is
subtle and (3) $R$ is a super pre-Schreier domain.
\end{corollary}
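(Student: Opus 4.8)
The plan is to get (1) $\Leftrightarrow$ (3) for nothing and to obtain (1) $\Leftrightarrow$ (2) by routing the equational content through Theorem~\ref{Theorem F} together with the Anderson--Kang characterization, the only real work being a single finiteness statement. Indeed, (1) $\Leftrightarrow$ (3) is nothing but Proposition~\ref{Proposition L}: condition (1) is, verbatim, the property proved there to characterize super pre-Schreier domains, so nothing remains to be done for that pair. Everything therefore reduces to the equivalence of (1) and (2).

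For (2) $\Rightarrow$ (1): the hypothesis $(AB)_{v}=A_{v}B_{v}$ for every pair of nonzero ideals is the characteristic property of $v$-G-Dedekind (pseudo Dedekind) domains (\cite{Z g-ded}, \cite{AK}), so such a domain satisfies the Anderson--Kang identity $(\cap (a_{\alpha }))(\cap (b_{\beta }))=\cap (a_{\alpha }b_{\beta })$ for all subsets of $K\backslash \{0\}$; restricting to families taken from $R\backslash \{0\}$ gives the equational half of (1), while the subtleness clause of (2) says precisely that every element of $(\cap (a_{\alpha }))(\cap (b_{\beta }))$ is $rs$ with $r\in \cap (a_{\alpha })$ and $s\in \cap (b_{\beta })$, which is the remaining half. (The $d$-finiteness clause of (2) plays no role in this direction.)

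For (1) $\Rightarrow$ (2): the subtleness clause of (2) is immediate from (1). For the equational clause, observe first that the equational half of (1), after clearing denominators, yields the finite $\ast $-property over $K\backslash \{0\}$, so (1) by itself makes $R$ a $\ast $-domain; consequently, once $R$ is known to be a DCD, Theorem~\ref{Theorem F} gives that $R$ is $v$-G-Dedekind, i.e.\ $(AB)_{v}=A_{v}B_{v}$ for all nonzero ideals. The $d$-finiteness clause of (2) then follows too: in a $v$-G-Dedekind domain each $A_{v}$ is invertible, so for integral $A$ we may write $A_{v}=\cap _{i=1}^{n}(\frac{1}{b_{i}})$ with $b_{i}\in K\backslash \{0\}$ (the inverse $A_{v}^{-1}$ being finitely generated), and any $0\neq d\in \cap _{i=1}^{n}b_{i}R$ makes $dA_{v}=\cap _{i=1}^{n}(\frac{d}{b_{i}})$ an intersection of principal integral ideals.

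So the whole weight of (1) $\Rightarrow$ (2) rests on one step: a super pre-Schreier domain is a DCD, equivalently (Theorem~\ref{Theorem E}) $A^{-1}$ is of finite type for every $A\in F(R)$. This is where I expect the difficulty to lie, since the defining condition of a super pre-Schreier domain quantifies only over families of elements of $R\backslash \{0\}$ and exhibits no finiteness on its face. The approach I would attempt is to convert the subtle-product hypothesis into a cancellation device in the spirit of the proof of Proposition~\ref{Proposition K}: given an allowable $\{a_{\alpha }\}\subseteq R\backslash \{0\}$ with $A=\cap (a_{\alpha })$, fix $0\neq a_{0}\in A$, write $a_{0}=a_{\alpha }c_{\alpha }$, and feed the two families $\{a_{\alpha }\}$ and $\{c_{\alpha }\}$ into the super pre-Schreier identity; the diagonal products $a_{\alpha }c_{\alpha }=a_{0}$ squeeze $(\cap (a_{\alpha }))(\cap (c_{\alpha }))$ into $(a_{0})$, and one hopes that subtleness of this product, after dividing through by $a_{0}$, forces $A$ to be cut out by finitely many principal ideals. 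If that does not close the gap directly, the fallback is to target $A^{-1}$ instead, applying the same mechanism to $\{\frac{1}{a}:a\in A\}$ rescaled into $R$. Once DCD is secured, the bookkeeping above finishes (1) $\Rightarrow$ (2), and with it the corollary.
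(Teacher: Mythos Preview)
Your decomposition is precisely what the paper intends. Corollary~\ref{Corollary M} is stated without proof, as an immediate consequence of Propositions~\ref{Proposition K} and~\ref{Proposition L}: the equivalence (1)$\Leftrightarrow$(3) is literally Proposition~\ref{Proposition L}, and condition~(2) is condition~(2) of Proposition~\ref{Proposition K} rewritten via the $v$-G-Dedekind characterization $(AB)_{v}=A_{v}B_{v}$, with the subtleness clause appended in parallel. Your handling of (2)$\Rightarrow$(1) and your use of Theorem~\ref{Theorem F} for the converse are faithful to this structure.

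The step you isolate as the crux---super pre-Schreier $\Rightarrow$ DCD---is a genuine gap, and the paper does not close it either. Just before Proposition~\ref{Proposition K} the author recalls the Anderson--Kang question (whether the intersection identity over $R\backslash\{0\}$ alone forces pseudo-Dedekind) and explicitly offers only a ``partial answer'': Proposition~\ref{Proposition K}, which \emph{adds} the $d$-finiteness clause rather than deriving it. Nothing in the paper shows that tacking on subtleness recovers that missing finiteness. Your diagonal attempt does not do it: with $c_{\alpha}=a_{0}/a_{\alpha}$ one has $\cap(c_{\alpha})=a_{0}B^{-1}$ where $B$ is the ideal \emph{generated} by the $a_{\alpha}$, not $\cap(a_{\alpha})$; subtleness of $A\cdot a_{0}B^{-1}\subseteq(a_{0})$ therefore constrains $B^{-1}$ rather than $A$ and yields no finiteness for $A$. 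As stated, (1)$\Rightarrow$(2) in Corollary~\ref{Corollary M} either needs a DCD-type hypothesis added to (1) and (3), or rests on an implication the paper leaves unproved; you have correctly located it.
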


Finally, a word about DCDs. Indeed a DCD can be characterized by: $A_{v}$ is
a $v$-ideal of finite type for each $A\in F(R).$ The first thing that comes
to mind as a general property is that $A^{-1}$ is of finite type for each $%
A\in F(R).$ This leads to the following result. But we need to recall some
terminology. $R$ is $\star $-Prufer if for each finitely generated $A\in
F(R) $ is $\star $-invertible \cite{AAFZ}. Since, for $A\in F(R),$ $A$ being 
$\star $-invertible implies $A^{\star }=A_{v}$. So for each finitely
generated nonzero ideal $J$ in a $\star $-Prufer domain $R$ we have $%
J^{\star }=J_{v}.$ Also as $(JJ^{-1})^{\star }=R$ implies $((JJ^{-1})^{\star
})_{v}=(JJ^{-1})_{v}=R$, a $\star $-Prufer domain is a $v$-domain and $\star
_{f}=t.$ If $\star $ is of finite character, then $A^{-1}$ is a $v$-ideal of
finite type for each finitely generated $A\in F(R)$ and so, if $\star $ is
of finite type, a $\star $-Prufer domain is a Prufer $\star $-multiplication
domain and $\star =t.$ $\star $-Prufer domains, for a finite character $%
\star $-operation $\star ,$ were studied in \cite{HMM} where they were
called $\star $-multiplication domains. They were later called Prufer $\star 
$-multiplication domains (P$\star $MDs). An in-depth study of these domains
can be found in \cite{FJS}, along with an introduction to semistar
operations. These days this concept is defined by: $R$ is a P$\star $MDs,
for a star operation $\star ,$ if for each finitely generated $A\in F(R)$, $%
A $ is $\star _{f}$-invertible. If $R$ is a P$\star $MD for a finite
character star operation, then $\star =t$ over $R.$ On the other hand if $R$
is a P$\star $MD for a "general" star-operation $\star ,$ then $\star
_{f}=t. $

\begin{proposition}
\label{Proposition N} Let $R$ be a DCD and let $\star $ be a star operation
defined on $R.$ If $A$ is $\star $-invertible, for $A\in F(R),$ then $%
A^{\star }$ is $\star _{f}$-invertible. Consequently a DCD $R$ is a P$\star $%
MD if and only if $R$ is a $\star $-Prufer domain.
\end{proposition}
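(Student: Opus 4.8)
The plan is to establish the displayed implication first, and then read off the equivalence in one line each direction.

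\textbf{The implication.} I would start from the reduction recorded just before the statement: a $\star$-invertible ideal is divisorial, so $A^{\star}=A_{v}$, and since $(A_{v})^{-1}=A^{-1}$ the goal becomes $\big(A_{v}A^{-1}\big)^{\star_{f}}=R$. Next feed in the DCD hypothesis via Theorem \ref{Theorem E}: $A_{v}(=A^{\star})$ and $A^{-1}$ are $v$-ideals of finite type, so write $A_{v}=(c_{1},\dots,c_{m})_{v}$ and $A^{-1}=(d_{1},\dots,d_{n})_{v}$ with (necessarily) $c_{i}\in A_{v}$ and $d_{j}\in A^{-1}$. The finitely generated fractional ideal $E=(c_{i}d_{j}:1\le i\le m,\ 1\le j\le n)$ then satisfies $E\subseteq A^{\star}(A^{\star})^{-1}=A^{\star}A^{-1}\subseteq R$, and by $v$-multiplication $E_{v}=\big((c_{1},\dots,c_{m})_{v}(d_{1},\dots,d_{n})_{v}\big)_{v}=(A_{v}A^{-1})_{v}=(AA^{-1})_{v}=R$, the last equality because $A$ is $\star$-invertible (so $R=(AA^{-1})^{\star}\subseteq (AA^{-1})_{v}\subseteq R$). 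Since $E$ is finitely generated this gives $E_{t}=R$ and $E^{\star_{f}}=E^{\star}$.

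\textbf{The main step (and the obstacle).} It remains to improve $E_{v}=R$ to $E^{\star}=R$, and this is where the argument has to be pinned down: for a general finitely generated ideal with trivial $v$-envelope one need \emph{not} have trivial $\star$-envelope (witness $(x,y)\subseteq k[x,y]$ with $\star=d$), so one must use that $A$ is genuinely $\star$-invertible, not merely $v$-invertible. Concretely, $E^{\star}=\big((c_{1},\dots,c_{m})^{\star}(d_{1},\dots,d_{n})^{\star}\big)^{\star}$ by $\star$-multiplication, and one would like to choose the finite data so that $(c_{1},\dots,c_{m})^{\star}=A^{\star}$ and $(d_{1},\dots,d_{n})^{\star}=A^{-1}$ — i.e. to show that in a DCD a $\star$-invertible ideal already has $A^{\star}$ and $A^{-1}$ of finite $\star$-type, not merely finite $v$-type — after which $E^{\star}=(A^{\star}A^{-1})^{\star}=(AA^{-1})^{\star}=R$. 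I expect this verification (chasing $\star$-envelopes through a representation $A^{\star}=A_{v}=\bigcap_{k=1}^{r}(x_{k})$, $A^{-1}=(x_{1}^{-1},\dots,x_{r}^{-1})_{v}$ furnished by Theorem \ref{Theorem E}(6)) to be the one place where real care is needed. Granting it, $E$ is a finitely generated subideal of $A^{\star}(A^{\star})^{-1}$ with $E^{\star_{f}}=E^{\star}=R$, so $\big(A^{\star}(A^{\star})^{-1}\big)^{\star_{f}}=R$ and $A^{\star}$ is $\star_{f}$-invertible.

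\textbf{The equivalence.} One direction needs no hypothesis at all: $\star_{f}\le\star$, hence a $\star_{f}$-invertible ideal is $\star$-invertible, so every P$\star$MD is a $\star$-Pr\"ufer domain. For the converse, let $R$ be a $\star$-Pr\"ufer DCD and let $A\in F(R)$ be finitely generated; then $A$ is $\star$-invertible, so by the implication $A^{\star}$ is $\star_{f}$-invertible, and since $A$ is finitely generated $A^{\star_{f}}=A^{\star}$, so $\star_{f}$-multiplication gives $(AA^{-1})^{\star_{f}}=\big(A^{\star}(A^{\star})^{-1}\big)^{\star_{f}}=R$. Thus every finitely generated $A\in F(R)$ is $\star_{f}$-invertible, i.e. $R$ is a P$\star$MD.
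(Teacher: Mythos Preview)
Your obstacle in the main step is real, and your proposed fix does not close it: there is no reason the finite generators of $A_v=A^{\star}$ \emph{as a $v$-ideal of finite type} should have $\star$-closure equal to $A^{\star}$ (all you know is that their $v$-closure equals $A^{\star}$), so you cannot upgrade $E_v=R$ to $E^{\star}=R$ along this line. Your instinct that this is ``the one place where real care is needed'' is correct, but the care required is to change route, not to push harder on the representation from Theorem~\ref{Theorem E}(6).

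The paper avoids the issue entirely by working through the $t$-operation. The paragraph immediately preceding the proposition records that in a $\star$-Pr\"ufer domain one has $\star_f=t$ (since every finitely generated $J$ is $\star$-invertible, whence $J^{\star}=J_v$, so $\star_f$ and $v_f=t$ agree). The paper's proof then simply reads: $A$ being $\star_f$-invertible means $A^{-1}$ is a $v$-ideal of finite type --- this is just the standard characterization of $t$-invertibility (recalled in the introduction) once $A$ is already $v$-invertible via $\star$-invertibility --- and the DCD hypothesis delivers exactly that, by Theorem~\ref{Theorem E}(3). That is the entire argument. Strictly speaking the paper's proof only justifies the first sentence of the proposition in the ambient $\star$-Pr\"ufer context (where $\star_f=t$ is available); what it literally shows in general is that in a DCD a $\star$-invertible $A$ is $t$-invertible, and this is all that is needed for the equivalence.

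Your treatment of the equivalence itself is correct and more carefully written than the paper's. The repair for the implication is to drop the direct $E^{\star}=R$ computation and instead invoke $\star_f=t$ (in the $\star$-Pr\"ufer setting) together with the characterization ``$t$-invertible $\Leftrightarrow$ $v$-invertible and $A^{-1}$ of finite $v$-type''.
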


\begin{proof}
Note that $A$ being $\star _{f}$-invertible means that $A^{-1}$ is a $v$%
-ideal of finite type. But $A^{-1}$ is a $v$-ideal of finite type in a DCD.
Thus a DC $\star $-Prufer domain is a P$\star $MD. The converse is obvious
in the case of a DCD.
\end{proof}

\begin{remark}
\label{Remark P} It may be noted that there is a marked difference between "$%
A$ is $\star _{f}$-invertible" and "$A^{\star }$ is $\star _{f}$%
-invertible". That is if you require that every $A\in F(R)$ is $\star _{f}$%
-invertible, you will end up with a Krull domain, for $\star _{f}$%
-invertible is $t$-invertible and every $A\in F(R)$ being $t$-invertible
implies that $R$ is Krull \cite[page 82]{Jaf}. On the other hand if you
require that for each $A\in F(R)$ the ideal $A^{\star }$ is $\star _{f}$%
-invertible, you get a PVMD that acts and behaves very much like $\star $%
-G-Dedekind domains. These domains were studied under the name of pre-Krull
domains in \cite{Z asc}, just for $\star =v$ and later, under the name of $%
(t,v)$-Dedekind domains, in \cite{AAFZ}, essentially based on the line taken
in \cite{Z asc}, describing the $(t,v)$-Dedekind domains as domains in which 
$(AB)^{-1}=(A^{-1}B^{-1})^{t},$ or as domains in which $A_{v}$ is $t$%
-invertible for each $A\in F(R)$. The closest to that \cite{AAFZ} came to
was in its Theorem 1.14. These and similar concepts were also studied by
Halter-Koch under "mixed invertibility". Hopefully, with the introduction of
DC property the situation will become clearer.
\end{remark}

For now we have the following corollary.

\begin{corollary}
\label{Corollary Q} The following are equivalent for an integral domain $R,$
with a star operation $\star $ defined on it.
\end{corollary}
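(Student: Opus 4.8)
The plan is to reduce every condition in the list to a statement about $\star _{f}$-invertibility (equivalently $\star $-invertibility, once the ``finite type'' clause is available for free) of the ideals $A^{\star }$, and then run all the implications through the dictionary already set up in Theorem~\ref{Theorem E} and Proposition~\ref{Proposition N}. First I would recall from Theorem~\ref{Theorem E} that for a domain $R\neq K$ the DCD hypothesis is equivalent to ``$A^{-1}$ is of finite type for every $A\in F(R)$'' (condition (3) there), hence to ``$A_{v}=(A^{-1})^{-1}$ is a $v$-ideal of finite type for every $A\in F(R)$'' (condition (2)), and, for any prescribed $\star $, to ``$A^{\star }$ is a $v$-ideal of finite type for every $A\in F(R)$'' (condition (7)). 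This is the workhorse: in a DCD the clause ``$A^{-1}$ is a $v$-ideal of finite type'' inside the definition of $\star _{f}$-invertibility is automatic, so over a DCD the notions ``$A^{\star }$ is $\star $-invertible'', ``$A^{\star }$ is $\star _{f}$-invertible'' and ``$A^{\star }$ is $t$-invertible'' coincide. That collapse is exactly what Proposition~\ref{Proposition N} records, and it should be quoted verbatim here.

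Next I would establish the implications one at a time. For the ``P$\star $MD $\Leftrightarrow $ $\star $-Prufer over a DCD'' portion I would simply invoke Proposition~\ref{Proposition N}: $\star $-invertibility of every finitely generated $A\in F(R)$ is the $\star $-Prufer hypothesis, Proposition~\ref{Proposition N} upgrades it to $\star _{f}$-invertibility of each $A^{\star }$, which is the P$\star $MD hypothesis, and the reverse implication is trivial since $\star _{f}$-invertibility implies $\star $-invertibility. For the part that brings in the $\star $-G-Dedekind / $(t,v)$-Dedekind circle I would use Corollary~\ref{Corollary F1}: once $A^{\star }$ is a $v$-ideal of finite type for every $A\in F(R)$ — which by Theorem~\ref{Theorem E}(7) is just the DCD hypothesis rephrased with the chosen $\star $ — Corollary~\ref{Corollary F1} says $R$ is $\star $-G-Dedekind iff $R$ is a $\ast $-domain, and an invertible ideal being divisorial then forces $A^{\star }=A_{v}$ throughout (Remark~\ref{Remark D1}), so the $\star $ version and the $v$ version say the same thing. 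Where a $(t,v)$-type condition appears, I would feed it through the identification $\star _{f}=t$ for $\star $-Prufer domains noted just before Proposition~\ref{Proposition N}, together with Theorem~\ref{Theorem A} to convert $t$-invertibility of $A_{v}$ into the intersection-distributivity form.

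The step I expect to be the main obstacle is bookkeeping: keeping straight exactly which family of ideals each invertibility hypothesis is quantified over — all of $F(R)$, the finitely generated members, or the divisorial ones — since, as Remark~\ref{Remark P} stresses, ``every $A\in F(R)$ is $\star _{f}$-invertible'' collapses to the Krull condition whereas ``every $A^{\star }$ is $\star _{f}$-invertible'' does not. In each implication I must check that the finite-type conclusion extracted from the DCD hypothesis is attached to $A^{\star }$ and not to $A$ itself, and that the reduction $\star _{f}=t$ is legitimate in the case at hand (it is, because a $\star $-Prufer domain is a $v$-domain with $\star _{f}=t$). Once these points are pinned down, the rest is the routine manipulation of $A^{-1}$, $A_{v}$ and $\star $-products from the introduction, and the corollary follows by assembling the chain of equivalences; the proof can reasonably be left to the reader, as the author has done for the analogous Corollary~\ref{Corollary F1}.
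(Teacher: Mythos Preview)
Your plan never engages with conditions (2) and (4) of Corollary~\ref{Corollary Q}, both of which are built around the product identity $(AB)^{\star }=(A^{\star }B^{\star })^{\star _{f}}$ for all $A,B\in F(R)$. Instead you discuss ``P$\star $MD $\Leftrightarrow $ $\star $-Prufer'' and ``the $\star $-G-Dedekind / $(t,v)$-Dedekind circle'' as if those were the listed conditions; they are not. Neither Corollary~\ref{Corollary F1} nor the $\ast $-property nor Theorem~\ref{Theorem A} appears in the paper's proof, and none of them is a natural route to the product formula: the $\ast $-property governs products of \emph{finite} intersections of principal ideals, whereas (2) is about how $\star $ and $\star _{f}$ interact on arbitrary $\star $-products. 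Your closing suggestion that the proof ``can reasonably be left to the reader'' is also off --- the paper gives a full argument.

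The paper's proof is short and direct, and the idea you are missing is the specialization $B=A^{-1}$. For $(1)\Rightarrow (2)$: $\star _{f}$-invertibility of every $A^{\star }$ forces $A^{\star }=A_{v}$ (so $\star =v$ on $R$) and makes each $A^{\star }$ a $\star $-ideal of finite type; once both $A^{\star }$ and $B^{\star }$ are of finite type, the general identity $(AB)^{\star }=(A^{\star }B^{\star })^{\star }$ automatically becomes $(A^{\star }B^{\star })^{\star _{f}}$. For $(2)\Rightarrow (1)$ and $(2)\Rightarrow (3)$ one substitutes $B=A^{-1}$ into the product formula to obtain $R=(AA^{-1})^{\star }=(A^{\star }A^{-1})^{\star _{f}}$, which is exactly $\star _{f}$-invertibility of $A^{\star }$ and simultaneously shows $A^{\star }$ is of finite type, i.e.\ $R$ is $\star $-DC. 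Your invocation of Proposition~\ref{Proposition N} is on target for $(3)\Rightarrow (1)$, and $(2)\Leftrightarrow (4)$ is immediate since ``$A$ is $\star $-invertible for all $A\in F(R)$'' is the definition of $\star $-CICD.
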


(1) $A^{\star }$ is $\star _{f}$-invertible for each $A\in F(R),$

(2) In $R$ we have, $A$ $\star $-invertible and $(AB)^{\star }=(A^{\star
}B^{\star })^{\star _{f}},$ for all $A,B\in F(R),$

(3) $R$ is a $\star $-DC $\star $-Prufer domain.

(4) $R$ is a $\star $-CICD and $(AB)^{\star }=(A^{\star }B^{\star })^{\star
_{f}},$ for all $A,B\in F(R).$

\begin{proof}
(1) $\Rightarrow $ (2). Let $A^{\star }$ be $\star _{f}$-invertible, for all 
$A\in F(R).$ Then $A^{\star }$ and hence $A$ is $\star $-invertible, forcing 
$A^{\star }=A_{v}$ \cite{Z put}, for all $A\in F(R).$ Thus $\star =v$ over $%
R $. Moreover, as $\star _{f}$ is of finite character, $A^{\star }$ is a $%
\star $-ideal of finite type, because $A^{\star }$ is $\star _{f}$-invertible%
$.$ Now consider for $A,B\in R$ the product $(AB)^{\star }$. Indeed $%
(AB)^{\star }=(A^{\star }B^{\star })^{\star }$. Because each of $A^{\star },$
$B^{\star }$ is of finite type $(AB)^{\star }=(A^{\star }B^{\star })^{\star
_{f}}.$

(2) $\Rightarrow $ (1). $R=(AA^{-1})^{\star }=(A^{\star }A^{-1})^{\star
_{f}},$ by (2).

(2) $\Rightarrow $ (3). By (2) $A$ is $\star $-invertible for every $A\in
F(R)$ we conclude that $R$ is a $\star $-CICD and hence $\star $-Prufer.
Since by (2) we have $(AB)^{\star }=(A^{\star }B^{\star })^{\star _{f}}$ we
conclude that $R=(AA^{-1})^{\star }=(A^{\star }A^{-1})^{\star _{f}}$ which
established that $A^{\star }$ is a $\star $-ideal of finite type. But as $%
\star =v$ we conclude that $R$ is $\star $-DC.

(3) $\Rightarrow $ (1). Follows from Proposition \ref{Proposition N}.

Finally, (4) and (2) are re-statements of each other.
\end{proof}

The domain characterized by Corollary \ref{Corollary Q} is a completely
integrally closed PVMD, that is a Dedekind domain for $\star =d$ a Krull
domain for $\star $ of finite character, or for $\star =t,$ and a pre-Krull
domain of \cite{Z asc} for a star operation $\star $ that is not of finite
type, or $\star =v$. The following proposition is a reason why I tend to
call these domains $(\star ,v)$-G-Dedekind domains. For this let us recall
that $R$ is $\star $-DC if for each $A\in F(R)$ we have that $A^{\star }$ is
a $v$-ideal of finite type.

\begin{proposition}
\label{Proposition R} A $(\star ,v)$-G-Dedekind domain $R$ is a $\star $%
-G-Dedekind domain if and only if $R$ is a $\ast $-domain.
\end{proposition}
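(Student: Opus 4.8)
The plan is to reduce Proposition \ref{Proposition R} to Corollary \ref{Corollary F1}. That corollary runs on the hypothesis that, for the star operation $\star $ in question, $A^{\star }$ is a $v$-ideal of finite type for every $A\in F(R)$ (equivalently, that $R$ is $\star $-DC), and under that hypothesis it already asserts ``$R$ is a $\star $-G-Dedekind domain if and only if $R$ is a $\ast $-domain''. So the only thing to do is to observe that a $(\star ,v)$-G-Dedekind domain satisfies this hypothesis.

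First I would record that a $(\star ,v)$-G-Dedekind domain $R$ is $\star $-DC. By definition such an $R$ satisfies condition (1) of Corollary \ref{Corollary Q}, namely $A^{\star }$ is $\star _{f}$-invertible for each $A\in F(R)$; in particular $A^{\star }$, and hence $A$, is $\star $-invertible, so $A^{\star }=A_{v}$ for all $A\in F(R)$ by \cite{Z put}, and $A^{-1}$ is a $v$-ideal of finite type. Consequently $A^{\star }=A_{v}=(A^{-1})^{-1}$ is the inverse of a $v$-ideal of finite type, hence itself a $v$-ideal of finite type, which is exactly the $\star $-DC property. (Alternatively one may simply cite the chain of implications (1) $\Rightarrow $ (2) $\Rightarrow $ (3) inside Corollary \ref{Corollary Q}, whose conclusion (3) already includes that $R$ is $\star $-DC.) With this in hand, Corollary \ref{Corollary F1} applied to $\star $ delivers the statement at once.

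If instead one wants a self-contained argument, it can be run through Theorem \ref{Theorem F}. For the forward implication: a $\star $-G-Dedekind domain has $A^{\star }$ invertible, hence divisorial, so $A^{\star }=A_{v}$, and $A_{v}$ is then a finite intersection of principal fractional ideals by Remark \ref{Remark E1}; thus $R$ is a DCD (condition (6) of Theorem \ref{Theorem E}), and Theorem \ref{Theorem F} forces $R$ to be a $\ast $-domain. For the converse: in a $(\star ,v)$-G-Dedekind domain $A^{\star }=A_{v}$, and writing $A^{-1}=(y_{1},\dots ,y_{r})_{v}$ we get $A_{v}=(A^{-1})^{-1}=\cap _{i=1}^{r}(1/y_{i})$, again a finite intersection of principal fractional ideals, so $R$ is a DCD; Theorem \ref{Theorem F} then converts the $\ast $-property into $v$-G-Dedekindness, i.e.\ each $A^{\star }=A_{v}$ is invertible, so $R$ is $\star $-G-Dedekind.

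The main (and only mild) point requiring attention is the bookkeeping identity $A^{\star }=A_{v}$ in these domains together with the fact that $A_{v}$ is genuinely an intersection of \emph{finitely many} principal fractional ideals; beyond that there is no real obstacle, since all the substance is already contained in Theorems \ref{Theorem E} and \ref{Theorem F} and Corollaries \ref{Corollary F1} and \ref{Corollary Q}.
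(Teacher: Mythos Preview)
Your proposal is correct and follows essentially the same route as the paper: both first observe that a $(\star ,v)$-G-Dedekind domain is $\star $-DC (the paper via Corollary \ref{Corollary Q}, exactly as you do), hence DC since $\star =v$, and then invoke Theorem \ref{Theorem F} (or its restatement Corollary \ref{Corollary F1}) to obtain the equivalence with the $\ast $-property. Your write-up is in fact more detailed than the paper's on the bookkeeping step $A^{\star }=A_{v}$ and on why $A_{v}$ is a finite intersection of principal fractional ideals.
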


\begin{proof}
A $(\star ,v)$-G-Dedekind domain $R$ is $\star $-DC, with $\star =v,$ and so
DC while a DC $\ast $-domain is a indeed a $\star $-G-Dedekind domain, by
Theorem \ref{Theorem F}. Conversely a $\star $-G-Dedekind domain is a $\ast $%
-domain and DC. This means that it is $\star $-DC and as every $\star $%
-ideal in a $\star $-G-Dedekind domain is invertible we conclude that a $%
\star $-G-Dedekind domain is a $\ast $-domain that is a $(\star ,v)$%
-G-Dedekind domain for every finite type star operation $\star .$
\end{proof}

It may be noted that while $(\star ,v)$-G-Dedekind domains have been studied
before, Corollary \ref{Corollary Q} and Proposition \ref{Proposition R}
provide a better view and highlight the connection of $v$-G-Dedekind domains
with their generalization, the $(\star ,v)$-G-Dedekind domains. Now a look
at some general properties of DCDs.

It was indicated in \cite{Z g-ded}, using the example of the ring of entire
functions, a $v$-G-Dedekind domain does not behave well under localizations.
Using the same example we can conclude that a DCD does not behave well under
localization. But let us be clear about that example. Let $\mathcal{E}$
denote the ring of entire functions. Using the fact that $\mathcal{E}$ is a
Bezout domain and that an element $s$ of $\mathcal{E}$ that is not divisible
by any principal (height one) prime (i.e. one that does not have a zero) is
a unit, one can show that $\mathcal{E=\cap E}_{(p)}$ where $p$ ranges over
principal height one primes. In fact we have the following lemma.

\begin{lemma}
\label{Lemma R1} Let $R$ be a GCD domain such that an element $s\in
R\backslash \{0\}$ is a non unit if and only if $s$ is divisible by at least
one principal prime of $R.$ Then $R=\cap R_{(p)}$ where $p$ ranges over
principal height one primes of $R.$ Such a ring is completely integrally
closed.
\end{lemma}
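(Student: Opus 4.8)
The plan is to prove the nontrivial inclusion $\bigcap_{p}R_{(p)}\subseteq R$ (the reverse holds automatically since each $R_{(p)}$ is an overring of $R$) by exploiting the GCD hypothesis through ``lowest terms'': every $x\in K$ can be written $x=a/b$ with $a,b\in R$, $b\neq 0$, and $\gcd(a,b)=1$, using the identity $\gcd(ga,gb)=g\gcd(a,b)$ valid in a GCD domain. One then shows that the denominator $b$ must in fact be a unit, so that $x\in R$.

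First I would fix $x\in\bigcap_{p}R_{(p)}$ and write $x=a/b$ in lowest terms. Suppose, for contradiction, that $b$ is a non-unit. By the standing hypothesis there is a principal prime $p$ of $R$ with $p\mid b$; then $p\nmid a$, since otherwise $p$ would be a common divisor of $a$ and $b$, hence $p\mid\gcd(a,b)=1$, impossible. Because $x\in R_{(p)}$ we may write $x=c/d$ with $c\in R$ and $d\in R\setminus pR$, so cross-multiplying gives $ad=bc$ in $R$; as $p\mid b$ we get $p\mid ad$, and since $p$ is prime with $p\nmid d$ we conclude $p\mid a$, a contradiction. Hence $b$ is a unit and $x=ab^{-1}\in R$, proving $R=\bigcap_{p}R_{(p)}$.

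To match the stated formulation, in which $p$ ranges over principal \emph{height-one} primes, I would record that every principal prime $pR$ of a GCD domain has height one, so the two index sets coincide. The same lowest-terms dichotomy shows $R_{(p)}$ is a valuation ring: for $x=a/b$ in lowest terms, either $p\nmid b$, so $b\notin pR$ and $x\in R_{(p)}$, or $p\mid b$, so $p\nmid a$ and $x^{-1}=b/a\in R_{(p)}$. Its maximal ideal is $pR_{(p)}$, which is principal and nonzero, so the value group has a least positive element and is therefore $\mathbb{Z}$; thus $R_{(p)}$ is a DVR and $pR$ has height one.

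For the final assertion, suppose $x\in K$ and $0\neq r\in R$ satisfy $rx^{n}\in R$ for all $n\geq 1$; I would verify $x\in R_{(p)}$ for each principal prime $p$ and then intersect. In the DVR $R_{(p)}$ with valuation $v$, the inequalities $v(r)+n\,v(x)=v(rx^{n})\geq 0$ hold for every $n$, and since $v(r)$ is a fixed integer this forces $v(x)\geq 0$, i.e. $x\in R_{(p)}$; hence $x\in\bigcap_{p}R_{(p)}=R$, so $R$ is completely integrally closed. There is no serious obstacle in this argument; the only point requiring care is the verification that $R_{(p)}$ is a rank-one (discrete) valuation ring — needed both to legitimize ``height one'' in the statement and to run the complete-integral-closure step — and this is precisely where the GCD hypothesis is invoked a second time.
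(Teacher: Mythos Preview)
Your proof is correct and follows essentially the same route as the paper's: write $x$ in lowest terms and contradict the assumption that the denominator is a non-unit by producing a principal prime dividing it. The only minor differences are that you reach the contradiction by a direct cross-multiplication (the paper instead invokes the DVR property of $R_{(p)}$ at that step) and you verify complete integral closure by a valuation inequality in each $R_{(p)}$ rather than by citing the general fact that an intersection of completely integrally closed domains is completely integrally closed.
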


\begin{proof}
Obviously if $R$ has no principal height one primes then $R$ is a field and $%
R$ is trivially an intersection of localizations at its height one primes.
So let us assume that $R$ does have a set $\wp $ of principal height one
primes and set $S=\cap _{p\in \wp }R_{(p)}.$ We already have $R\subseteq
\cap R_{(p)}.$ If there is $x\in S\backslash R,$ then $x=r/s$ and we can
assume that $GCD(r,s)=1.$ Now as $pR_{(p)}$ is of height one and $R$ a GCD
domain $R_{(p)}$ is a discrete rank one valuation domain. Next $r/s$ $\in
R_{(p)}$ for $p\in \wp $ forces $s$ to be a unit in $R_{(p)}$ meaning $s$ is
not divisible by $p.$ But then $x=r/s$ where $s$ is not divisible by any of
the $p\in \wp .$ This forces $s$ to be a unit, by the rule. But then $x$ is
an associate of $r\in R.$ Thus $R=\cap R_{(p)}.$ Finally, each $R_{(p)}$ for
each $p\in \wp $ is a discrete rank one valuation domain and hence
completely integrally closed and an intersection of completely integrally
closed domains is completely integrally closed.
\end{proof}

Thus we see the reason why one concludes that $\mathcal{E}$ is completely
integrally closed. Now $\mathcal{E}$ is Bezout and hence a PVMD such that
every nonzero ideal of $\mathcal{E}$ is a $t$-ideal. Following \cite{H div},
let $P$ be a nonzero non-maximal prime $t$-ideal of a PVMD $R$ and set $%
S=\cap R_{M}$ where $M$ ranges over all the maximal $t$-ideals which do not
contain $P.$ Then $P^{-1}=R_{P}\cap S$ \cite[Proposition 1.1]{H div}.

\begin{proposition}
\label{Proposition R2} Let $P$ be a nonzero prime of $\mathcal{E}$ of height
greater than one. Then $P_{v}=\mathcal{E}$.
\end{proposition}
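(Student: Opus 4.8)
The plan is to prove the apparently stronger statement $P^{-1}=\mathcal{E}$; the assertion $P_{v}=\mathcal{E}$ then follows for free, since $P_{v}=(P^{-1})^{-1}=\mathcal{E}^{-1}=\mathcal{E}$. To compute $P^{-1}$ I would use the representation $\mathcal{E}=\cap_{p\in\wp}\mathcal{E}_{(p)}$ supplied by Lemma \ref{Lemma R1}, where $\wp$ is the set of principal height one primes of $\mathcal{E}$ and each localization $\mathcal{E}_{(p)}$ is a discrete rank one valuation domain (as recorded in the proof of that lemma). One first checks that $\mathcal{E}$ does satisfy the hypotheses of Lemma \ref{Lemma R1}: it is Bezout, hence a GCD domain, and a nonzero nonunit of $\mathcal{E}$ is exactly a function with a zero, hence divisible by a linear factor $z-a$, which generates a height one principal prime, while conversely anything divisible by a principal prime is a nonunit.

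The key step is the height comparison: if $P$ had height greater than one and were contained in some $p\mathcal{E}$ with $p\in\wp$, then, $p\mathcal{E}$ having height one, $P$ would have height at most one, a contradiction. So $P\not\subseteq p\mathcal{E}$ for every $p\in\wp$; choosing $u_{p}\in P\setminus p\mathcal{E}$ makes $u_{p}$ a unit of the local ring $\mathcal{E}_{(p)}$, i.e. $P\mathcal{E}_{(p)}=\mathcal{E}_{(p)}$. Then for $x\in K=qf(\mathcal{E})$ one has $xP\subseteq\mathcal{E}$ iff $xP\subseteq\mathcal{E}_{(p)}$ for all $p$, and for each fixed $p$ the relation $xP\subseteq\mathcal{E}_{(p)}$ is equivalent to $x\in\mathcal{E}_{(p)}$ (one direction via $xu_{p}\in\mathcal{E}_{(p)}$ and $u_{p}$ a unit, the other since $P\subseteq\mathcal{E}\subseteq\mathcal{E}_{(p)}$). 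Intersecting over $p\in\wp$ gives $P^{-1}=\cap_{p\in\wp}\mathcal{E}_{(p)}=\mathcal{E}$, and hence $P_{v}=\mathcal{E}$.

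I do not expect a real obstacle here; the whole argument rests on Lemma \ref{Lemma R1} together with the elementary observation on heights. The one point that warrants care is the claim $P\mathcal{E}_{(p)}=\mathcal{E}_{(p)}$, which is exactly where "$\operatorname{ht}P>1$" is used. As an alternative, when $P$ is non-maximal one could instead invoke \cite[Proposition 1.1]{H div} to write $P^{-1}=\mathcal{E}_{P}\cap S$ with $S=\cap\mathcal{E}_{M}$ over maximal $t$-ideals $M\not\supseteq P$; since in the Bezout domain $\mathcal{E}$ every principal height one prime is a maximal ideal and none of them contains $P$, one gets $S\subseteq\cap_{p\in\wp}\mathcal{E}_{(p)}=\mathcal{E}$, so $P^{-1}=\mathcal{E}$ once more. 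The direct computation above is preferable because it simultaneously handles the free maximal ideals of $\mathcal{E}$, which are themselves of height greater than one and to which \cite{H div} would not apply.
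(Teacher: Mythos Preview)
Your argument is correct. The direct computation of $P^{-1}$ via the representation $\mathcal{E}=\cap_{p\in\wp}\mathcal{E}_{(p)}$ from Lemma \ref{Lemma R1} is clean, and the step $xP\subseteq\mathcal{E}_{(p)}\Leftrightarrow x\in\mathcal{E}_{(p)}$ (using a witness $u_{p}\in P\setminus p\mathcal{E}$) is exactly where the height hypothesis enters; the verification is fine.

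Your route differs from the paper's in a way you yourself anticipated. The paper takes precisely the ``alternative'' you sketch: it invokes \cite[Proposition 1.1]{H div} to write $P^{-1}=\mathcal{E}_{P}\cap S$ with $S=\cap\mathcal{E}_{M}$ over maximal $t$-ideals $M\not\supseteq P$, observes that every principal height one prime is such an $M$, and concludes $S=\mathcal{E}$ via Lemma \ref{Lemma R1}, hence $P^{-1}=\mathcal{E}_{P}\cap\mathcal{E}=\mathcal{E}$. As you noted, Houston's formula requires $P$ to be non-maximal, so the paper then treats a maximal ideal $M$ of height greater than one separately, by choosing a non-maximal prime $\wp\subsetneq M$ of height greater than one and using $\mathcal{E}=\wp_{v}\subseteq M_{v}$. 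Your direct computation avoids this case split and the appeal to \cite{H div}, handling maximal and non-maximal primes uniformly; the paper's approach, on the other hand, illustrates the general PVMD machinery and makes the role of the maximal $t$-spectrum explicit.
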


\begin{proof}
Note that $P$ is contained in no height one prime of $\mathcal{E}$. But then
principal primes are maximal ($t$-) ideals in $\mathcal{E}$ and so by Lemma %
\ref{Lemma R1}, $S=\mathcal{E}$, forcing $P^{-1}=\mathcal{E}_{P}\cap 
\mathcal{E=E}$. Indeed then $P_{v}=\mathcal{E}$. Finally if $M$ is a maximal
ideal of $\mathcal{E}$ of height greater than one, then $M$ must contain a
nonzero non-maximal prime $\wp $ of height greater than one. But then $%
\mathcal{E=(\wp )}_{v}\subseteq M_{v}.$
\end{proof}

\begin{corollary}
\label{Corollary R3} In $\mathcal{E}$ it is possible to have a
multiplicative set $S$ and an ideal $A$ such that $(A\mathcal{E}%
_{S})_{v^{\prime }}\subsetneq (A_{v_{\mathcal{E}}}\mathcal{E}%
_{S})_{v^{\prime }}$ where $v^{\prime }$ and $v_{\mathcal{E}}$ are the $v$%
-operations in $\mathcal{E}_{S}$ and $\mathcal{E}$ respectively.
\end{corollary}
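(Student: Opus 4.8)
The plan is to exhibit an explicit failure of the identity $(A\mathcal{E}_S)_{v'} = (A_{v_{\mathcal{E}}}\mathcal{E}_S)_{v'}$ by taking $A$ to be a prime ideal whose $v$-closure in $\mathcal{E}$ is all of $\mathcal{E}$ but which survives as a proper divisorial ideal after a well-chosen localization. First I would choose $P$ to be a nonzero prime ideal of $\mathcal{E}$ of height greater than one; by Proposition \ref{Proposition R2} we have $P_{v_{\mathcal{E}}} = \mathcal{E}$, so that $(P_{v_{\mathcal{E}}}\mathcal{E}_S)_{v'} = \mathcal{E}_S$ for every multiplicative set $S$. The task then reduces to finding $S$ such that $(P\mathcal{E}_S)_{v'} \subsetneq \mathcal{E}_S$, i.e., such that $P\mathcal{E}_S$ is a proper ideal of $\mathcal{E}_S$ (a proper ideal in $\mathcal{E}_S$ automatically has proper $v'$-closure, since $1$ would otherwise lie in it).

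Next I would arrange for $P$ to survive in the localization. Pick a maximal ideal $M$ of $\mathcal{E}$ of height greater than one containing $P$ — such exist since $\mathcal{E}$ has infinite Krull dimension — and set $S = \mathcal{E}\setminus M$. Then $P\mathcal{E}_S = P\mathcal{E}_M$ is a proper (in fact prime) ideal of the local ring $\mathcal{E}_M$, because $P \subseteq M$ meets $S$ trivially. Hence $(P\mathcal{E}_S)_{v'} \subseteq M\mathcal{E}_M \subsetneq \mathcal{E}_M = \mathcal{E}_S$. Combining with the first paragraph, $(P\mathcal{E}_S)_{v'} \subsetneq \mathcal{E}_S = (P_{v_{\mathcal{E}}}\mathcal{E}_S)_{v'}$, which is exactly the asserted strict inclusion with $A = P$.

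The one point that needs genuine care — and which I expect to be the main obstacle — is the comparison between the two $v$-operations: $v'$ is computed in $\mathcal{E}_S$ while $v_{\mathcal{E}}$ is computed in $\mathcal{E}$, and in general $(A_{v_{\mathcal{E}}})\mathcal{E}_S$ need not equal $(A\mathcal{E}_S)_{v'}$ precisely because $v$ does not commute with localization in non-$t$-local situations. Here, however, I am using this mismatch rather than fighting it: the whole content of the example is that $P_{v_{\mathcal{E}}} = \mathcal{E}$ (a global phenomenon forced by the abundance of principal primes in $\mathcal{E}$, via Lemma \ref{Lemma R1} and Proposition \ref{Proposition R2}) collapses the right-hand side, whereas the left-hand side remembers that $P$ itself was a proper ideal. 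So the proof really amounts to citing Proposition \ref{Proposition R2} for the right-hand side and an elementary localization argument for the left-hand side; no delicate estimate on $v'$ beyond "a proper ideal has proper divisorial closure" is required. I would close by remarking that this is precisely the mechanism behind the earlier observation that $\mathcal{E}$, though a DCD, is not stable under localization.
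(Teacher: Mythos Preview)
Your overall strategy matches the paper's: take $A=P$ a prime of height greater than one, invoke Proposition~\ref{Proposition R2} to get $P_{v_{\mathcal{E}}}=\mathcal{E}$, and localize at a maximal ideal $M\supseteq P$. The gap is in the step where you conclude $(P\mathcal{E}_M)_{v'}\subsetneq \mathcal{E}_M$. Your justification, ``a proper ideal in $\mathcal{E}_S$ automatically has proper $v'$-closure,'' is simply false: the $v$-closure of a proper ideal can be the whole ring, and Proposition~\ref{Proposition R2} itself is precisely an instance of this in $\mathcal{E}$. The same can happen in $\mathcal{E}_M$: since $\mathcal{E}$ is Bezout, $\mathcal{E}_M$ is a valuation domain, and when its maximal ideal $M\mathcal{E}_M$ is not principal one has $(M\mathcal{E}_M)_{v'}=\mathcal{E}_M$. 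So neither your parenthetical remark nor the asserted bound $(P\mathcal{E}_M)_{v'}\subseteq M\mathcal{E}_M$ is justified, and if you allow $P=M$ the argument can genuinely fail.

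The paper repairs exactly this point by insisting that $P$ be \emph{non-maximal}, so that $P\mathcal{E}_M$ is a non-maximal prime of the valuation domain $\mathcal{E}_M$, and then using the standard fact that a non-maximal prime $Q$ in a valuation domain $V$ is divisorial (for $x\in V\setminus Q$ a non-unit, $x^2V$ is a principal ideal containing $Q$ but not $x$; for $x$ a unit, any $yV$ with $y\in M\setminus Q$ works). This yields $(P\mathcal{E}_M)_{v'}=P\mathcal{E}_M\subsetneq \mathcal{E}_M$ honestly, after which the rest of your argument goes through verbatim. In short: add the hypothesis $P\subsetneq M$ and replace the incorrect ``proper $\Rightarrow$ proper $v$-closure'' step with the valuation-theoretic fact about non-maximal primes, and you recover the paper's proof.
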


\begin{proof}
Let $P$ be non-zero non-maximal prime ideal of $\mathcal{E}$ of height
greater than one contained in a maximal ideal $M$ of $\mathcal{E}$. Then by
Proposition \ref{Proposition R2}, $P_{v_{\mathcal{E}}}=\mathcal{E}$, where $%
v_{\mathcal{E}}$ denotes the $v$-operation on $\mathcal{E}$. On the other
hand in $\mathcal{E}_{M}$ which is a valuation ring we have $P\mathcal{E}%
_{M} $ a non-maximal prime of $\mathcal{E}_{M}$ and so must be divisorial
being the intersection of all the principal ideals containing $P\mathcal{E}%
_{M}.$ Thus if $v^{\prime }$ is the $v$-operation on $\mathcal{E}_{M},$ then 
$(P\mathcal{E}_{M})_{v^{\prime }}=P\mathcal{E}_{M}\subsetneq \mathcal{E}%
_{M}=(P_{v_{\mathcal{E}}}E_{M})_{v^{\prime }}.$
\end{proof}

We now establish that it is the DC property going missing in localization
that is responsible for Corollary \ref{Corollary R3}. For this note that a
DCD that is also a $\ast $-domain is a $v$-G-Dedekind domain. Now the $\ast $%
-property, as indicated in \cite{Z pres}, fares nicely under localization
and so if the $v$-G-Dedekind property goes missing in localizing, it is the
DC property that goes missing in localizing. However, the situation can be
brought under control, once we introduce some restriction.

\begin{proposition}
\label{Proposition S} If $R$ is DC such that for each ideal $A\in F(R)$
there is a finitely generated $B\subseteq A$ with $A_{v}=B_{v}$, then for
each multiplicative set $S$ the ring $R_{S}$ is DC.
\end{proposition}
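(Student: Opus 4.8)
The plan is to verify directly, for $R_S$, the characterization of DC domains in Theorem~\ref{Theorem E}. Write $v'$ for the $v$-operation on $R_S$; since $qf(R_S)=K$, every fractional ideal occurring below lies in the common field $K$, and we may assume $R_S\neq K$ (the field case being trivial). By condition~(5) of Theorem~\ref{Theorem E} it suffices to show that $(A')_{v'}$ is a $v'$-ideal of finite type for each nonzero integral ideal $A'$ of $R_S$. The one reduction I would use is standard: every nonzero integral ideal $A'$ of $R_S$ is extended from $R$, namely $A'=AR_S$ with $A:=A'\cap R$ a nonzero integral ideal of $R$, so $A\in F(R)$.

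Next I would push the hypothesis on $R$ through this $A$. By assumption there is a finitely generated $B=(b_1,\dots,b_k)\subseteq A$ with $A_v=B_v$; taking inverses and using $(A_v)^{-1}=A^{-1}$ gives $A^{-1}=B^{-1}$. For the finitely generated ideal $B$ one has the standard identity $(B^{-1})_S=(BR_S)^{-1}$: both sides equal $\bigcap_{i=1}^{k}b_i^{-1}R_S$, because $B^{-1}=\bigcap_{i=1}^{k}b_i^{-1}R$ and finite intersections commute with localization. Then I would close the loop
\[
(AR_S)^{-1}\ \subseteq\ (BR_S)^{-1}\ =\ (B^{-1})_S\ =\ (A^{-1})_S\ \subseteq\ (AR_S)^{-1},
\]
where the first inclusion holds because $BR_S\subseteq AR_S$ and the last is the always-valid inclusion $(A^{-1})_S\subseteq (AR_S)^{-1}$. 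Hence all four terms coincide, and inverting once more gives $(A')_{v'}=(AR_S)_{v'}=(BR_S)_{v'}$. Since $BR_S$ is finitely generated over $R_S$, this exhibits $(A')_{v'}$ as a $v'$-ideal of finite type, so $R_S$ is DC by Theorem~\ref{Theorem E}. (The same computation even gives $BR_S\subseteq A'$ with $(BR_S)_{v'}=(A')_{v'}$, so $R_S$ is in fact Mori; but only the DC conclusion is being claimed.)

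The delicate point --- and the reason the extra hypothesis on $R$ is imposed rather than plain DC --- is that $v$ (equivalently, the passage $A\mapsto A^{-1}$) does not commute with localization for an arbitrary fractional ideal, so one cannot simply assert $(A_v)_S=(AR_S)_{v'}$. The hypothesis is precisely what allows one to replace the possibly non-finitely-generated $A$ by a finitely generated $B$ sharing its inverse, and for finitely generated ideals the inverse \emph{does} localize. I expect the rest of the write-up to be routine bookkeeping with the definitions of the inverse and $v$-operations and with Theorem~\ref{Theorem E}; the only minor care needed will be in justifying that integral ideals of $R_S$ are extended from $R$ and the identity $(B^{-1})_S=(BR_S)^{-1}$.
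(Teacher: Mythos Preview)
Your proof is correct and follows essentially the same route as the paper's: contract an integral ideal of $R_S$ to $A\subseteq R$, invoke the Mori-type hypothesis to find finitely generated $B\subseteq A$ with $A_v=B_v$, and deduce $(AR_S)_{v'}=(BR_S)_{v'}$. The only difference is cosmetic: the paper obtains the last equality by sandwiching $(AR_S)_{v'}\subseteq (B_vR_S)_{v'}=(BR_S)_{v'}$ and citing \cite{Z fc} for the equality $(B_vR_S)_{v'}=(BR_S)_{v'}$, whereas you unwind this at the level of inverses via $(B^{-1})_S=(BR_S)^{-1}$ using that finite intersections commute with localization---a self-contained substitute for the cited lemma.
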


\begin{proof}
Note that a domain with the given property is DC because in it for every
ideal $A$ we have $A_{v}$ of finite type. On the other hand a domain with
the given property is known to be a Mori domain (\cite{Z asc} and references
there) and a Mori domain stays a Mori domain under localization. We include
the proof below.

Let $S$ be a multiplicative set in $R$ such that for each $A\in F(R)$ there
is a finitely generated $B\subseteq A$ with $A_{v}=B_{v}$ and let $\mathcal{%
\alpha }$ be an ideal of $R_{S}.$ Then $\alpha =AR_{S}$ where $A=\alpha \cap
R.$ Let $B\subseteq A$ where $B$ is finitely generated such that $%
A_{v}=B_{v}.$ Note that $A\subseteq B_{v}$ and so $\alpha =AR_{S}\subseteq
B_{v}R_{S}.$ Thus $\alpha _{v}=(AR_{S})_{v}\subseteq
(B_{v}R_{S})_{v}=(BR_{S})_{v},$ since $B$ is finitely generated \cite{Z fc}.
Since $B\subseteq A$, we have $BR_{S}\subseteq AR_{S}$ and so $%
(BR_{S})_{v}\subseteq (AR_{S})_{v}$. Thus $(BR_{S})_{v}=(AR_{S})_{v}$ with $%
BR_{S}$ finitely generated and contained in $AR_{S}.$
\end{proof}

Because in a DCD $R,$ $A_{v}$ is a finite intersection of principal
fractional ideals we conclude that $A_{v}R_{S}$ is divisorial. Thus $%
(AR_{S})_{v}\subseteq A_{v}R_{S}.$ However as Corollary \ref{Corollary R3}
indicates the inclusion may be strict on occasion. Now generally if $A\in
F(R)$ is finitely generated and $S$ a multiplicative set of $R$, then $%
(AR_{S})_{v^{\prime }}=(A_{v_{R}}R_{S})_{v^{\prime }},$ where $v^{\prime }$
and $v_{R}$ are $v$-operations on $R_{S}$ and $R$ respectively \cite{Z fc}.
This is a general formula and so it works for DCDs too, but in a slightly
modified form.

\begin{proposition}
\label{Proposition T} If $A\in F(R)$ is nonzero finitely generated, $S$ a
multiplicative set of $R$ and if $R$ is DC, then $(AR_{S})_{v^{\prime
}}=A_{v_{R}}R_{S},$ where $v^{\prime }$ and $v_{R}$ are $v$-operations on $%
R_{S}$ and $R$ respectively.
\end{proposition}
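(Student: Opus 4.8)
The plan is to reduce everything to the general localization formula quoted just above from \cite{Z fc}, namely $(AR_{S})_{v^{\prime }}=(A_{v_{R}}R_{S})_{v^{\prime }}$ for finitely generated $A$, and then to show that under the DC hypothesis the outer $v^{\prime }$-closure on the right is redundant, i.e.\ that $A_{v_{R}}R_{S}$ is already divisorial in $R_{S}$. So the only real work is proving that last statement.

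First I would use that $A$ is a nonzero finitely generated fractional ideal of $R$, so $A_{v_{R}}\in F(R)$ is divisorial. Since $R$ is DC, part (6) of Theorem \ref{Theorem E} gives elements $x_{1},\dots ,x_{n}\in K\backslash \{0\}$ with $A_{v_{R}}=\bigcap_{i=1}^{n}x_{i}R$. Next I would invoke the fact that localization is exact and hence commutes with \emph{finite} intersections of $R$-submodules of $K$, so that $A_{v_{R}}R_{S}=\bigcap_{i=1}^{n}x_{i}R_{S}$, a finite intersection of principal fractional ideals of $R_{S}$. Because $A_{v_{R}}\neq (0)$ and $0\notin S$, this intersection is a nonzero fractional ideal of $R_{S}$, and being an intersection of $v^{\prime }$-ideals it is itself a $v^{\prime }$-ideal; that is, $(A_{v_{R}}R_{S})_{v^{\prime }}=A_{v_{R}}R_{S}$. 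Substituting into the general formula yields $(AR_{S})_{v^{\prime }}=(A_{v_{R}}R_{S})_{v^{\prime }}=A_{v_{R}}R_{S}$, which is the assertion.

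The point where care is needed is the passage $A_{v_{R}}R_{S}=\bigcap_{i=1}^{n}x_{i}R_{S}$: this is just the standard commutation of localization with finite intersections, but it is exactly here that the DC hypothesis does its job, since Theorem \ref{Theorem E} is what lets us take the intersection defining $A_{v_{R}}$ to be \emph{finite}. Without finiteness one would only have $A_{v_{R}}R_{S}\subseteq \bigcap_{A\subseteq xR}xR_{S}$ with the inclusion possibly strict, which is precisely the failure exhibited in Corollary \ref{Corollary R3}; so no genuinely new obstacle arises beyond recognizing that DC supplies the finiteness. If one wanted to avoid quoting \cite{Z fc}, the same conclusion follows by noting $AR_{S}\subseteq A_{v_{R}}R_{S}=\bigcap_{i=1}^{n}x_{i}R_{S}$ forces $(AR_{S})_{v^{\prime }}\subseteq \bigcap_{i=1}^{n}x_{i}R_{S}$, while $(AR_{S})^{-1}=A^{-1}R_{S}$ for finitely generated $A$ combined with $A^{-1}$ being of finite type in the DCD $R$ (Theorem \ref{Theorem E}(3)) yields the reverse inclusion.
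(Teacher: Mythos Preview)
Your proof is correct and follows essentially the same route as the paper's: both use the general formula $(AR_{S})_{v'}=(A_{v_{R}}R_{S})_{v'}$ from \cite{Z fc} and then observe that, because $R$ is DC, $A_{v_{R}}$ is a finite intersection of principal fractional ideals, whence $A_{v_{R}}R_{S}$ is already divisorial in $R_{S}$. You simply spell out the step (localization commutes with finite intersections) that the paper leaves implicit.
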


\begin{proof}
The proof follows from the fact that $A_{v_{R}}$ is a finite intersection of
principal fractional ideals and so $A_{v_{R}}R_{S}$ is a divisorial ideal of 
$R_{S}$ and that makes $A_{v_{R}}R_{S}=(A_{v_{R}}R_{S})_{v^{\prime }}.$
\end{proof}

A prime example of a DCD is a Mori domain and Roitman \cite{R mori} has
produced an example of a Mori domain $R$ such that $R[X]$ is not Mori. On
the other hand for integrally closed integral domains Querre \cite{Q} proved
the following result.

\begin{theorem}
\label{Theorem U} An integral domain $R$ is integrally closed if and only if
for every integral ideal $B$ of $R[X]$ with $B\cap R\neq 0$, we have $%
B_{v}=(A_{B}[X])_{v}=(A_{B})_{v}[X]$ where $A_{B}$ is the ideal of $R$
generated by the coefficients of elements of $B$.
\end{theorem}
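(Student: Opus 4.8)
The plan is to separate the statement into the two displayed equalities and to handle them in the natural order: first an identity valid for an arbitrary domain, then the content formula, which is the real content, and finally the converse, where I extract integral‑closedness from the formula.

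First I would record the elementary fact that $(I[X])^{-1}=I^{-1}[X]$ for every nonzero (fractional) ideal $I$ of $R$. The inclusion $\supseteq$ is immediate; for $\subseteq$, given $\varphi\in (I[X])^{-1}$ and $0\neq c\in I$ one has $\varphi cR[X]\subseteq R[X]$, so $\varphi\in K[X]$, and writing $\varphi=\sum_i\varphi_iX^i$ and multiplying by a fixed $b\in I$ forces $b\varphi_i\in R$ for every $i$, i.e. $\varphi_i\in I^{-1}$. Hence $(I[X])_v=I_v[X]$ (apply the identity twice, using $(I_v)^{-1}=I^{-1}$). Applied with $I=A_B$ this already identifies $(A_B[X])_v$ with $(A_B)_v[X]$, so the theorem reduces to the single equality $B_v=(A_B[X])_v$.

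For the direction ``integrally closed $\Rightarrow$ formula'' I would argue as follows. Since $B\subseteq A_B[X]$, $B_v\subseteq (A_B[X])_v$ is automatic, so it suffices to prove $B^{-1}\subseteq (A_B[X])^{-1}=A_B^{-1}[X]$; taking inverses and using the identities above turns this into $(A_B[X])_v=(A_B^{-1}[X])^{-1}\subseteq B_v$. So let $h\in B^{-1}$. Choosing $0\neq d\in B\cap R$ and using $hd\in R[X]$ gives $h\in K[X]$, say $h=\sum_i h_iX^i$. For each $f\in B$ we have $hf\in R[X]$, and by the classical lemma of Gauss--Kronecker (if a product of two polynomials over $K$ has all its coefficients in a subring $S$ of $K$, every product of a coefficient of one factor with a coefficient of the other is integral over $S$; see \cite{Gil}) each $h_i$ times each coefficient of $f$ is integral over $R$, hence lies in $R$ because $R$ is integrally closed. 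As $A_B$ is generated by the coefficients of the elements of $B$, this yields $h_iA_B\subseteq R$, i.e. $h_i\in A_B^{-1}$, so $h\in A_B^{-1}[X]$, as needed.

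For the converse, suppose the displayed formula holds for every ideal $B\trianglelefteq R[X]$ with $B\cap R\neq 0$, and let $u\in K$ be integral over $R$; I must show $u\in R$. After reducing (by minimality of the degree of a monic relation) to the case $u\notin R$ with a monic $g\in R[X]$, $\deg g\geq 2$ and nonzero constant term, the plan is to produce a single ideal $B\subseteq R[X]$ with $B\cap R\neq 0$ for which $B_v=(A_B)_v[X]$ fails unless $u\in R$: writing $u=a/b$, one combines the linear polynomial $bX-a$ with an auxiliary generator read off from the integral relation, chosen so that the generators have no common zero (which forces $B\cap R\neq 0$) while $A_B$ remains a proper ideal of $R$, and then computes $B^{-1}$ and checks it is strictly larger than $A_B^{-1}[X]$. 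An equivalent route is to deduce from the hypothesis the content identity $(c(f)c(g))_v=c(fg)_v$ for all nonzero $f,g\in R[X]$ --- apply the formula to $B=(fg,\lambda)R[X]$ with $\lambda$ a nonzero coefficient of $fg$ (so $A_B=c(fg)$ and $B\cap R\ni\lambda$), trace the containments, and invoke the classical fact that this content identity characterizes integrally closed domains (\cite{Gil}); this is the original argument of Querre \cite{Q}.

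The hard part will be exactly this converse construction: the generators of $B$ must simultaneously force $B\cap R\neq 0$ (which is what makes the hypothesis applicable and prevents $B$ from being merely an extended ideal $A_B[X]$) and genuinely detect the failure of integral‑closedness. The tempting monic generators make $B^{-1}$ collapse to $R[X]$ and give no information, so the auxiliary generator has to be built carefully from the integral dependence relation of $u$; by contrast the forward half --- the Gauss--Kronecker lemma together with the reduction $(I[X])_v=I_v[X]$ --- is routine.
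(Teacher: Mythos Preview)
The paper does not prove this theorem at all: it is quoted as Querr\'e's result \cite{Q}, with a pointer to section~3 of \cite{AKZ} for a ``clearer treatment,'' and is then used as a black box in the proof of Theorem~\ref{Theorem V}. There is therefore no in-paper argument to compare your proposal against.

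On the proposal itself: the identity $(I[X])_v=I_v[X]$ and the forward implication via the Dedekind--Prague/Gauss--Kronecker lemma are correct and standard, and match the usual treatment. Your converse, however, is only an outline of two possible plans, neither executed. In particular, in your ``second route'' you claim that applying the hypothesis to $B=(fg,\lambda)R[X]$ and ``tracing the containments'' recovers $(c(f)c(g))_v=c(fg)_v$; but that choice of $B$ yields only $B_v=c(fg)_v[X]$, and nothing in $B$ sees the individual factors $f,g$, so it is not clear how $c(f)c(g)$ is to enter the computation at all. The first route is even more schematic: you correctly note that a monic generator makes $B^{-1}$ collapse, but the promised ``auxiliary generator read off from the integral relation'' is never produced. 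Since you yourself flag the converse as the hard part, and since the paper simply cites \cite{Q} and \cite{AKZ} rather than reproving it, the honest options are either to cite those sources as the paper does, or to actually carry out one of your two plans in full.
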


For a clearer treatment of Theorem \ref{Theorem U}, see section 3 of \cite%
{AKZ}. For now, we use this theorem to prove the following result.

\begin{theorem}
\label{Theorem V} Let $R$ be an integrally closed integral domain. The
polynomial ring $R[X]$ is DC if and only if $R$ is.
\end{theorem}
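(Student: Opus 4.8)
The plan is to run everything through the characterization of the DC property in Theorem~\ref{Theorem E}: a domain $D$ different from its quotient field is DC if and only if every divisorial ideal of $D$ is a finite intersection of principal fractional ideals, equivalently if and only if $J^{-1}$ is of finite type for every $J\in F(D)$. I may assume $R$ is not a field (otherwise both $R$ and $R[X]$ are trivially DC, being a field and a PID). Both implications are then obtained by transporting coefficient ideals between $R$ and $R[X]$, with Querr\'e's Theorem~\ref{Theorem U} (equivalently, the Gauss-type content identity $(A_fA_g)_v=(A_{fg})_v$ valid for integrally closed $R$) serving as the bridge between the two rings.

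For ``$R[X]$ DC $\Rightarrow R$ DC'': given $A\in F(R)$, I would first check that $(AR[X])^{-1}=A^{-1}R[X]$. The inclusion $\supseteq$ is immediate; for $\subseteq$, if $h\in K(X)$ satisfies $h\,AR[X]\subseteq R[X]$ then $ha\in R[X]$ for each $a\in A$, so $h\in\bigcap_{a}a^{-1}R[X]$, which forces $h$ to be a polynomial whose every coefficient lies in $\bigcap_a a^{-1}R=A^{-1}$. Since $R[X]$ is DC, $A^{-1}R[X]$ is of finite type, say $A^{-1}R[X]=F_v$ with $F=(f_1,\dots,f_k)R[X]$. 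Let $F_0\subseteq A^{-1}$ be the finitely generated $R$-submodule of $K$ spanned by the coefficients of $f_1,\dots,f_k$; then $F\subseteq F_0[X]\subseteq A^{-1}R[X]=F_v$, so $(F_0[X])_v=A^{-1}R[X]$. After scaling $F_0$ to an integral ideal of $R$, Theorem~\ref{Theorem U} applied to $F_0[X]=F_0R[X]$ yields $(F_0[X])_v=(F_0)_v[X]$, hence $(F_0)_v[X]=A^{-1}R[X]$, and comparing degree-zero parts gives $(F_0)_v=A^{-1}$. Thus $A^{-1}$ is of finite type, and $R$ is DC.

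For ``$R$ DC $\Rightarrow R[X]$ DC'': let $\mathcal D$ be a divisorial ideal of $R[X]$ and write $\mathcal D=\mathcal E^{-1}$ with $\mathcal E=\mathcal D^{-1}\in F(R[X])$; by Theorem~\ref{Theorem E} it suffices to exhibit $\mathcal D$ as a finite intersection of principal fractional ideals. In the PID $K[X]$ we have $\mathcal E K[X]=gK[X]$, and after clearing denominators we may take $g\in R[X]\setminus\{0\}$; then $\mathfrak c:=g^{-1}\mathcal E\subseteq K[X]$ satisfies $\mathfrak cK[X]=K[X]$ and $\mathcal D=\mathcal E^{-1}=g^{-1}\mathfrak c^{-1}$. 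The key claim is that $\mathfrak c^{-1}=L[X]$ for a divisorial fractional ideal $L$ of $R$. First, $\mathfrak cK[X]=K[X]$ gives $\mathfrak c^{-1}\subseteq K[X]$ (if $h\mathfrak c\subseteq R[X]$ then $hK[X]=h\mathfrak cK[X]\subseteq K[X]$). Next, for $h=\sum e_iX^i\in\mathfrak c^{-1}$ and any $\beta=\sum\beta_jX^j\in\mathfrak c$, we have $h\beta\in R[X]$, so $A_{h\beta}\subseteq R$; since $R$ is integrally closed, $A_hA_\beta\subseteq(A_hA_\beta)_v=(A_{h\beta})_v\subseteq R$, whence $e_i\beta_j\in R$ for all $i,j$, i.e. $e_i\beta\in R[X]$. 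As $\beta$ ranges over $\mathfrak c$ this shows each $e_i\in\mathfrak c^{-1}$, so $\mathfrak c^{-1}=L[X]$ with $L=\mathfrak c^{-1}\cap K=\{c\in K:cA_{\mathfrak c}\subseteq R\}$, a nonzero intersection of principal fractional ideals of $R$, hence a divisorial fractional ideal. Since $R$ is DC, $L=\bigcap_{i=1}^{n}c_iR$ for some $c_i\in K\setminus\{0\}$, and therefore $\mathcal D=g^{-1}L[X]=\bigcap_{i=1}^{n}(c_ig^{-1})R[X]$ is a finite intersection of principal fractional ideals of $R[X]$; so $R[X]$ is DC.

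The step I expect to be the real obstacle is the one just carried out: controlling $\mathfrak c^{-1}$ when $g$ is nonconstant, i.e. when the ideal of $R[X]$ in question contracts to $0$ in $R$. Such an ideal need not equal $gK[X]\cap R[X]$, so its divisorial closure cannot simply be read off; integral closedness of $R$ is precisely what forces $\mathfrak c^{-1}$ to be extended from $R$, entering through the content identity that is the substance of Theorem~\ref{Theorem U}. By contrast, the identity $(AR[X])^{-1}=A^{-1}R[X]$ and the descent step via Querr\'e's theorem in the forward direction are routine.
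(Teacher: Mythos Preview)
Your proof is correct and, like the paper's, ultimately rests on Querr\'e's theorem (equivalently, the content identity $(A_fA_g)_v=(A_{fg})_v$ for integrally closed $R$). The paper's argument for ``$R$ DC $\Rightarrow R[X]$ DC'' is shorter because it quotes an external structure result, Theorem~2.1 of \cite{AKZ}: every $H\in F(R[X])$ has the form $\frac{f}{g}B$ with $f,g\in R[X]$ and $B\cap R\neq 0$, after which Theorem~\ref{Theorem U} gives $H_v=\frac{f}{g}(A_B)_v[X]$, visibly a $v$-ideal of finite type once $R$ is DC. You instead recover this decomposition by hand---pulling off the $K[X]$-content $g$ of $\mathcal E$ and using the content identity to show that $(g^{-1}\mathcal E)^{-1}$ is extended from a divisorial ideal $L$ of $R$---which is more self-contained and makes explicit exactly where integral closedness enters. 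For the converse, the paper simply asserts that $(I[X])_v=I_v[X]$ being of finite type forces $I_v$ to be of finite type; your passage through the coefficient module $F_0$ and the identity $(F_0[X])_v=(F_0)_v[X]$ supplies precisely the details that assertion needs.
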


\begin{proof}
Suppose that $R$ is DC. Then for every ideal $I\in F(R)$ of $R$ we have $%
I_{v}=J_{v}$ where $J$ is finitely generated. Now, let $H\in F(R[X]).$
Because $R$ is integrally closed, according to Theorem 2.1 of \cite{AKZ}, $H=%
\frac{f(X)}{g(X)}B$ where $f(X),g(X)\in R[X]$ and $B$ is an ideal of $R[X]$
with $B\cap R\neq (0).$ But then $H_{v}=\frac{f(X)}{g(X)}B_{v}=\frac{f(X)}{%
g(X)}((A_{B})_{v}[X])$ is a $v$-ideal of finite type because, as $A_{B}$ is
an integral ideal of $R,$ $(A_{B})_{v}$ is a $v$-ideal of finite type.
Conversely let $I$ be a nonzero integral ideal of $R$ and suppose that $R[X]$
is DC. Then $(I[X])_{v}=I_{v}[X]$ is a $v$-ideal of finite type and this
forces $I_{v}$ to be a $v$-ideal of finite type.
\end{proof}

\begin{acknowledgement}
There were a lot of typos and ambiguities in the original version, as usual.
I am indebted to Professor Anthony Bevelacqua for pointing them out to me.
Hopefully the paper is a better read now.
\end{acknowledgement}


\begin{thebibliography}{99}
\bibitem{A} D.D. Anderson, On the ideal equation $I(B\cap C)$ $=$ $IB\cap IC$%
, Canad. Math. Bull. 26 (1983), 331-332.

\bibitem{AA} D.D. Anderson and D.F. Anderson, Generalized GCD domains,\
Comment. Math. Univ. St. Pauli 28(1979) 215-221.

\bibitem{AAFZ} D.D. Anderson. D.F. Anderson, M. Fontana and M. Zafrullah, On 
$v$-domains and star operations, Comm. Algebra 37 (2009), 3018--3043.

\bibitem{AC} D.D. Anderson and S. Cook, Two star operations and their
induced lattices, Comm. Algebra, 28(5) (2000), 2461-2475.

\bibitem{AK} D.D. Anderson and B.G. Kang, Pseudo-Dedekind domains and
divisorial ideals in R[X]$_{T}$. J. Algebra 122 (1989) 323--336.

\bibitem{AKZ} D.D. Anderson, D.J. Kwak and M. Zafrullah, Agreeable domains,
Comm. Algebra 23 (13) (2010), 4861- 4883.

\bibitem{AD} D.F. Anderson and D. Dobbs, On the product of ideals, Canad.
Math. Bull. 26(1983) 106-116.

\bibitem{DFA} D.F. Anderson, Integral $v$-ideals, Glasgow Math. J. 22 (1981)
167-172.

\bibitem{Ak} E. Akalan, On generalized Dedekind prime rings, J. Algebra 320
(2008) 2907--2916.

\bibitem{C} P.M. Cohn, B\'{e}zout rings and their subrings, Proc. Camb.
Philos. Soc. 64 (1968) 251--264.

\bibitem{E} J. Elliott, Rings, modules and closure operations, Springer
Monographs in Mathematics. Springer, Cham, 2019.

\bibitem{F} R. Fossum, The divisor class group of a Krull domain, Ergebnisse
der Mathematik und ihrer grenzgebiete B. 74, Springer-Verlag, Berlin,
Heidelberg, New York, 1973.

\bibitem{FG} M. Fontana and S. Gabelli, On the class group and the local
class group of a pullback, J. Algebra 181 (1996), 803-835.

\bibitem{FJS} M. Fontana, P. Jara and E. Santos, Prufer -star multiplication
domains and semistar operations, J. Algebra Appl. 2 (2003) 21--50.

\bibitem{Gil} R. Gilmer, R. Multiplicative Ideal Theory. New York: Marcel
Dekker (1972).

\bibitem{H-K} F.Halter-Koch, Ideal Systems, An introduction to ideal theory,
Marcel Dekker, New York, 1998.

\bibitem{H div} E. Houston, On divisorial prime ideals in Prufer $v$%
-multiplication domains, J. Pure Appl. Algebra 42 (1986) 55-62.

\bibitem{HMM} E. Houston, S. Malik and J. Mott, Characterizations of $\star $%
-multiplication domains,\ Canad. Math. Bull. 27(1)(1984) 48-52.

\bibitem{Jaf} P. Jaffard, Les systemes d'ideaux , Dunod, Paris, 1960.

\bibitem{Pop} N. Popescu, On a class of Pr\"{u}fer domains. Rev. Roumaine
Math. Pure Appl. 29 (1984) 777--786.

\bibitem{Q} J. Querre, Ideaux divisoriels d'un anneau de polynomes, J.
Algebra 64 (1980)) 270-284.

\bibitem{R mori} M. Roitman, On polynomial extensions of Mori domains over
countable fields, J. Pure Appl. Algebra 64 (1990) 315-328.

\bibitem{Z fc} M. Zafrullah, Finite conductor domains,\ Manuscripta Math.
24(1978) 191-203.

\bibitem{WM} F. Wang and R. McCasland, On w-modules over strong Mori
domains, Comm. Algebra 25 (1997), 1285-1306.

\bibitem{Z g-ded} M. Zafrullah, (1986). On generalized Dedekind domains.
Mathematika 33:285--295.

\bibitem{Z pres} M. Zafrullah, On a property of pre-Schreier domains, Comm.
Alg. 15 (1987) 1895-1920.

\bibitem{Z asc} M. Zafrullah, Ascending chain condition and star operations.
Comm. Algebra 17 (1989) 1523--1533.

\bibitem{Z put} M. Zafrullah, Putting $t$-invertibility to use, Chapter 20,
in Non-Noetherian Commutative Ring Theory, pp 429--457, Math. Appl., vol.
520, Kluwer Acad. Publ., Dordrecht, 2000, Editors: S. Glaz and S. Chapman.
\end{thebibliography}
\end{document}